\begin{document}
%
% paper title
% can use linebreaks \\ within to get better formatting as desired
\title{NPGA: A Unified Algorithmic Framework for Decentralized Constraint-Coupled Optimization}
%
%
%
% author names and IEEE memberships
% note positions of commas and nonbreaking spaces ( ~ ) LaTeX will not break
% a structure at a ~ so this keeps an author's name from being broken across
% two lines.
% use \thanks{} to gain access to the first footnote area
% a separate \thanks must be used for each paragraph as LaTeX2e's \thanks
% was not built to handle multiple paragraphs
%
%
%
\author{Jingwang Li and Housheng Su% <-this % stops a space
  % \thanks{This work was supported in part by the National Natural Science Foundation of China under Grant 61991412 and Grant 62273159, in part by the Interdisciplinary Research Program of HUST under Grant 2023JCYJ008, and in part by the Program for HUST Academic Frontier Youth Team under Grant 2018QYTD07. \textit{(Corresponding author: Housheng Su.)}}
  \thanks{The authors are with the School of Artificial Intelligence and Automation and the Key Laboratory of Image Processing and Intelligent Control of Education Ministry of China, Huazhong University of Science and Technology, Wuhan 430074, China (email: jingwangli@outlook.com, houshengsu@gmail.com).}
  \thanks{Digital Object Identifier \href{https://doi.org/10.1109/TCNS.2024.3354882}{10.1109/TCNS.2024.3354882}}
}

\maketitle

\begin{abstract}
  %\boldmath
  This work focuses on a class of general decentralized constraint-coupled optimization problems. We propose a novel nested primal-dual gradient algorithm (NPGA), which can achieve linear convergence under the weakest known condition, and its theoretical convergence rate surpasses all known results. More importantly, NPGA serves not only as an algorithm but also as a unified algorithmic framework, encompassing various existing algorithms as special cases. By designing different network matrices, we can derive numerous versions of NPGA and analyze their convergences by leveraging the convergence results of NPGA conveniently, thereby enabling the design of more efficient algorithms. Finally, we conduct numerical experiments to compare the convergence rates of NPGA and existing algorithms, providing empirical evidence for the superior performance of NPGA.
\end{abstract}
% IEEEtran.cls defaults to using nonbold math in the Abstract.
% This preserves the distinction between vectors and scalars. However,
% if the journal you are submitting to favors bold math in the abstract,
% then you can use LaTeX's standard command \boldmath at the very start
% of the abstract to achieve this. Many IEEE journals frown on math
% in the abstract anyway.

% Note that keywords are not normally used for peerreview papers.
\begin{IEEEkeywords}
  Constraint-coupled optimization, nested primal-dual gradient algorithm, linear convergence, unified algorithmic framework.
\end{IEEEkeywords}

% For peer review papers, you can put extra information on the cover
% page as needed:
% \ifCLASSOPTIONpeerreview
% \begin{center} \bfseries EDICS Category: 3-BBND \end{center}
% \fi
%
% For peerreview papers, this IEEEtran command inserts a page break and
% creates the second title. It will be ignored for other modes.
\IEEEpeerreviewmaketitle

\section{Introduction}
Consider a networked system composing of $n$ agents, the network topology is represented by an undirected graph $\mathcal{G}=(\mathcal{N}, \mathcal{E})$, where $\mathcal{N} = \{1,\cdots, n\}$ and $\mathcal{E} \subset \mathcal{N} \times \mathcal{N}$ denote the sets of nodes and edges, respectively. Specifically, $(i, j) \in \mathcal{E}$ if agents $i$ and $j$ can communicate with each other. In this work, we consider the following decentralized constraint-coupled optimization problem:
\begin{equation} \label{pro2} \tag{P1}
  \begin{aligned}
    \min_{x_i \in \mR^{d_i}} \  & \sum_{i=1}^n \pare{f_i(x_i) + g_i(x_i)} + h\lt(\sum_{i=1}^{n}A_ix_i\rt),
  \end{aligned}
\end{equation}
where $f_i:\mR^{d_i} \rightarrow \mR$, $g_i:\mR^{d_i} \rightarrow \mR \cup \{+\infty\}$, and $A_i \in \mR^{p \times d_i}$ are completely private to agent $i$. This means that agent $i$ cannot share this information with other agents in the system. On the other hand, $h:\mR^p \rightarrow \mR \cup \{+\infty\}$ is public and known to all agents. Without loss of generality, we assume that the solution set of \cref{pro2} is nonempty.

Different from the classical decentralized unconstrained optimization problem
\begin{equation} \label{uc} \tag{P2}
  \min_{x \in \mR^m} \sum_{i=1}^{n}\phi_i(x),
\end{equation}
\cref{pro2} introduces an additional cost function $h$ that couples the decision variables of all agents. Notice that we can reformulate \cref{pro2} as a new problem with a globally coupled equality constraint $\sum_{i=1}^{n}A_ix_i - y = 0$ by introducing a relaxed variable $y$, which is the reason we refer to it as a decentralized constraint-coupled optimization problem \cite{falsone2020tracking}. Many real-world optimization problems can be modeled or reformulated as \cref{pro2}, including vertically distributed regression and vertical federated learning \cite{chang2014multi,liu2022vertical}, distributed basis pursuit \cite{mota2011distributed}, and distributed resource allocation \cite{yi2016initialization,zhu2019distributed,zhang2020distributed}.

In this work, we aim to answer two questions:
\begin{enumerate}[(i)]
  \item Can we develop linearly convergent distributed algorithms for \cref{pro2} with weaker convergence conditions and faster convergence rates?
  \item Can we design a unified algorithmic framework for \cref{pro2}?
\end{enumerate}

Regarding the first question, while there have been some linearly convergent distributed algorithms proposed for problems similar to \cref{pro2}, most of them can only handle specific cases of \cref{pro2}. This means that stronger conditions need to be imposed on \cref{pro2} to make it solvable by these algorithms. These conditions include having $g_i=0$ and $h$ being an indicator function corresponding to the equality constraint \cite{kia2017distributed,yi2016initialization,zhu2019distributed,zhang2020distributed,nedic2018improved,chang2014multi,alghunaim2019proximal,li2022implicit}, $A_i=1$ \cite{kia2017distributed}, $A_i=\mI$ \cite{yi2016initialization,zhu2019distributed,zhang2020distributed,nedic2018improved}, or $A_i$ has full row rank \cite{chang2014multi,alghunaim2019proximal}. However, these conditions are often too restrictive and cannot be satisfied by most real-world problems. For example, in the context of vertical federated learning, the dimensions of $A_i$ correspond to the numbers of samples and features that agent $i$ owns, respectively. However, the number of samples is usually much larger than that of features, which means that the conditions about $A_i$ mentioned above cannot be met. Therefore, there is a need for linearly convergent distributed algorithms with weaker conditions. Some recent progress has been made in this direction \cite{li2022implicit,alghunaim2021dual}. In \cite{li2022implicit}, an implicit tracking-based distributed augmented primal-dual gradient dynamics (IDEA) is proposed, which can converge without the need for the strong convexity of $f_i$ and achieve linear convergence under the condition that $g_i=0$, $h$ is an indicator function corresponding to the equality constraint, and $A = [A_1, \cdots, A_n]$ has full row rank, which is much weaker than the above conditions about $A_i$. On the other hand, \cite{alghunaim2021dual} focuses solely on linear convergence, and the dual consensus proximal algorithm (DCPA) is proposed, which achieves linear convergence under the condition that $g_i=0$ and $A$ has full row rank or $h$ is smooth. However, there are some drawbacks to both IDEA and DCPA:
\begin{enumerate}[(i)]
  \item Though the linear convergence rate of IDEA has been established, as a continuous-time algorithm, it needs to be discretized for practical implementation. However, the upper bounds of the discretized step-sizes and the corresponding convergence rate of its discretization are not clear.
  \item For DCPA, the linear convergence rate is obtained by inevitably imposing a smaller bound on one of its step-sizes, which leads to a looser convergence rate.
  \item Many Adapt-then-Combine (ATC) algorithms have been proposed for \cref{uc} \cite{nedic2017geometrically,xu2015augmented,li2019decentralized}, and it has been shown that ATC algorithms have additional favorable properties compared to Combine-then-Adapt (CTA) ones \footnote{There are two types of information diffusion strategies over networks: CTA and ATC \cite{sayed2014diffusion}. Accordingly, existing distributed optimization algorithms can be categorized as CTA or ATC algorithms based on their information diffusion strategies.}, such as the ability to use uncoordinated step-sizes or step-sizes as large as their centralized counterparts \cite{nedic2017geometrically,li2019decentralized}. However, both IDEA and DCPA are CTA algorithms, thus it remains open problems whether ATC algorithms can linearly solve \cref{pro2} under the same or weaker conditions as CTA ones and if they are superior in other aspects.

\end{enumerate}

In terms of the second question, several unified algorithmic frameworks have been proposed for the decentralized unconstrained optimization problem \cref{uc} \cite{alghunaim2020decentralized,xu2021distributed,li2022gradient,wu2022unifying}. However, to the best of our knowledge, there is no such framework for \cref{pro2}. Given that there are only two algorithms capable of linearly solving \cref{pro2} under weaker conditions, it becomes crucial to develop a unified algorithmic framework that enables the convenient design of new algorithms and facilitates the analysis of their convergences.

The major contributions of this work are summarized as follows:
\begin{enumerate}[(i)]
  \item Regarding the second question, we propose a novel nested primal-dual gradient algorithm for \cref{pro2}, which is a unified algorithmic framework. NPGA involves the configuration of three network matrices, allowing it to encompass existing algorithms, such as DCPA and the dual coupled diffusion algorithm (DCDA) \cite{alghunaim2019proximal}, as special cases. To the best of our knowledge, this is the first unified algorithmic framework for solving \cref{pro2} and its variants. By designing new network matrices within NPGA, we can generate novel distributed algorithms, including both ATC and CTA algorithms, for \cref{pro2}. The convergence analysis of these new algorithms can be conveniently conducted by leveraging the convergence results of NPGA.
  \item For the first question, we prove that NPGA achieves linear convergence under the weakest known condition: $g_i=0$ and $A = [A_1, \cdots, A_n]$ has full row rank, or $h$ is smooth. As mentioned before, DCPA can achieve linear convergence under the same condition. However, DCPA is merely a special case of NPGA, and its convergence results can be fully encompassed by those of NPGA. Furthermore, we prove that certain ATC variants of NPGA exhibit larger upper bounds of step-sizes and tighter convergence rates compared to DCPA. Numerical experiments also demonstrate that certain versions of NPGA have much faster convergence rates than DCPA in terms of both the numbers of iterations and communication rounds, which correspond to computational and communication costs, respectively.
  \item For the special case of \cref{pro2} with $g_i=0$ and $h$ representing an indicator function corresponding to the equality constraint, we derive a weaker condition concerning the network matrices of NPGA to ensure linear convergence and obtain a tighter convergence rate. This extension significantly expands the range of network matrix choices within NPGA. As a special case of NPGA, the convergence rate of DCPA can also be improved. Additionally, we show that certain ATC versions of NPGA can employ step-sizes as large as the centralized version of NPGA, resulting in a larger stability region compared to other versions of NPGA, including DCPA.
\end{enumerate}

\section{Preliminaries} \label{preliminaries}
In this section, we introduce the notations and lemmas utilized throughout the paper.

\textit{Notations:} $\1_n$ and $0_n$ represent the vectors of $n$ ones and zeros, respectively. $\mI_n$ denotes the $n\times n$ identity matrix, and $0_{m \times n}$ denotes the $m\times n$ zero matrix. Noticed that if the dimensions can be inferred from the context, we would not explicitly indicate them. For a positive semi-definite matrix $B \in \mR^{n \times n}$, we define $\|x\|^2_B = x^\top Bx$. Given two symmetric matrices $A \in \mR^{n \times n}$ and $B \in \mR^{n \times n}$, $A > B$ ( $A \geq B$) indicates that $A-B$ is positive definite (positive semi-definite). For $A \in \mR^{n \times n}$, $\eta_1(A)$, $\underline{\eta}(A)$, and $\overline{\eta}(A)$ denote the smallest, smallest non-zero, and largest eigenvalues of $A$, respectively. For $B \in \mR^{m \times n}$, $\us(B)$, $\sigma_i(B)$, and $\os(B)$ represent the smallest non-zero, $i$-th smallest, and largest singular values of $B$, respectively. $\Null(B)$ and $\col(B)$ denote the null space and the column space of $B$, respectively. For a vector $v \in \mR^n$, $\Span(v)$ denotes its span.
For a function $f: \mR^m \rightarrow \mR$, $\mathcal{S}_f(x)$ denotes any subgradient of $f$ at $x$, $\partial f(x)$ denotes the subdifferential of $f$ at $x$, which is the set of all subgradients of $f$ at $x$, and $\prox_{\alpha f}(x) = \arg\min_{y} f(y) + \frac{1}{2\alpha}\norm{y-x}^2$ represents the proximal operator of $f$ with step-size $\alpha$.
Finally, $\text{diag}(\cdot)$ denotes the (block) diagonal matrix.

\begin{lemma}[\cite{nesterov2018lectures}] \label{convex_inequality}
  Let $f: \mR^{m} \rightarrow \mR$ be a  $\mu$-strongly convex and $l$-smooth function, it holds that
  \eqe{
    f(x) \geq f(y) + \dotprod{\nabla f(y), x-y} + \frac{\mu}{2}\norm{x-y}^2, \\
    f(x) \leq f(y) + \dotprod{\nabla f(y), x-y} + \frac{l}{2}\norm{x-y}^2, \\
    \dotprod{\nabla f(x) - \nabla f(y), x-y} \geq \mu\|x-y\|^2, \\
    \|\nabla f(x) - \nabla f(y)\|^2 \leq l\dotprod{\nabla f(x) - \nabla f(y), x-y},
    \nonumber
  }
  for any $x, y \in \mR^m$.
\end{lemma}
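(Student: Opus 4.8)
The plan is to derive all four inequalities directly from the two defining properties of $f$ --- $\mu$-strong convexity and $l$-smoothness --- since the statement is a classical collection of estimates (see \cite{nesterov2018lectures}). For the first inequality I would introduce the auxiliary function $g := f - \frac{\mu}{2}\norm{\cdot}^2$, which is convex precisely because $f$ is $\mu$-strongly convex. Applying the standard first-order convexity inequality $g(x) \geq g(y) + \dotprod{\nabla g(y), x-y}$, substituting $\nabla g(y) = \nabla f(y) - \mu y$, and completing the square via $\frac{\mu}{2}\norm{x}^2 - \frac{\mu}{2}\norm{y}^2 - \mu\dotprod{y, x-y} = \frac{\mu}{2}\norm{x-y}^2$ yields the claim.

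For the second inequality (the descent lemma) I would use the integral form of the fundamental theorem of calculus, $f(x) - f(y) = \int_0^1 \dotprod{\nabla f(y + t(x-y)), x-y}\,dt$, subtract $\dotprod{\nabla f(y), x-y}$ from both sides, and bound the resulting integrand by Cauchy--Schwarz together with the $l$-Lipschitz continuity of $\nabla f$; the remaining integral $\int_0^1 lt\,dt$ evaluates to $l/2$. The third inequality is then immediate by strong monotonicity: writing the first inequality once as stated and once with $x$ and $y$ interchanged and adding the two cancels the function values and leaves $\dotprod{\nabla f(x) - \nabla f(y), x-y} \geq \mu\norm{x-y}^2$, which is the asserted estimate (with the free variable in the statement read as $x$).

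The fourth inequality --- co-coercivity of the gradient --- is the step I expect to be the main obstacle, since it is the only one that genuinely couples convexity and smoothness rather than using each property in isolation. My plan is to fix $x$ and study the shifted function $\phi(z) := f(z) - \dotprod{\nabla f(x), z}$, which is again convex and $l$-smooth and whose unique global minimizer is $x$ (because $\nabla \phi(x) = 0$). Minimizing over a single gradient step the quadratic upper bound supplied by the descent lemma applied to $\phi$ gives $\phi(x) = \min_z \phi(z) \leq \phi(z) - \frac{1}{2l}\norm{\nabla \phi(z)}^2$; unwinding the definition of $\phi$ produces the sharpened lower bound $f(y) \geq f(x) + \dotprod{\nabla f(x), y-x} + \frac{1}{2l}\norm{\nabla f(x) - \nabla f(y)}^2$. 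Symmetrizing this bound by swapping $x$ and $y$ and adding the two inequalities cancels the function values and yields $\dotprod{\nabla f(x) - \nabla f(y), x-y} \geq \frac{1}{l}\norm{\nabla f(x) - \nabla f(y)}^2$, which rearranges to the stated inequality. The delicate point here is the gradient-step minimization argument, which is exactly where the factor $1/l$ --- and hence the $l$ multiplying the right-hand side of the fourth estimate --- originates.
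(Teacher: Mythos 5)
Your proposal is correct, and all four derivations are the classical ones: the paper itself offers no proof of this lemma, since it is imported verbatim from the cited reference \cite{nesterov2018lectures}, and your arguments (the convexity of $f - \frac{\mu}{2}\norm{\cdot}^2$, the descent lemma via the fundamental theorem of calculus, symmetrization for strong monotonicity, and the gradient-step argument yielding co-coercivity) are exactly the standard proofs found there. You also correctly recognized that the variable $w$ in the last two inequalities is a typo for $x$ and handled it accordingly, so there is nothing to fix.
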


\begin{lemma} \label{min_max}
  Given two matrices $M \in \mR^{m \times n}$ and $H \in \mR^{n \times n}$, where $H$ is symmetric, it holds that $\sigma_i(MH) \leq \os(M)\sigma_i(H)$,  $i = 1, \cdots, n$.
\end{lemma}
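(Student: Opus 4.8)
The plan is to invoke the Courant--Fischer min--max characterization of singular values, which is exactly what the lemma's name suggests. Recall that for any matrix $C \in \mR^{m \times n}$ the $n$ singular values $\sigma_1(C) \le \cdots \le \sigma_n(C)$, ordered increasingly (with zeros appended when $m < n$), are the square roots of the eigenvalues of $C^\top C$ and therefore admit the variational formula
\[
  \sigma_i(C) = \min_{\dim V = i} \; \max_{0 \neq x \in V} \frac{\|Cx\|}{\|x\|}, \qquad i = 1, \dots, n,
\]
where $V$ ranges over all $i$-dimensional subspaces of $\mR^n$. The only other ingredient I would record is the elementary spectral-norm bound $\|Mz\| \le \os(M)\|z\|$, valid for every $z \in \mR^n$, which is just the definition of the largest singular value of $M$.

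Applying this bound with $z = Hx$, for any fixed subspace $V$ I get $\|MHx\| \le \os(M)\|Hx\|$, and hence $\max_{0 \neq x \in V} \|MHx\|/\|x\| \le \os(M) \max_{0 \neq x \in V} \|Hx\|/\|x\|$. Taking the minimum over all $i$-dimensional subspaces $V$ on both sides and reading the displayed characterization with $C = MH$ on the left and $C = H$ on the right yields $\sigma_i(MH) \le \os(M)\sigma_i(H)$, which is the claim. Note that $MH$ is $m \times n$ whereas $H$ is $n \times n$, but since both characterizations run over subspaces of $\mR^n$ and both matrices carry exactly $n$ singular values in this (possibly zero-padded) convention, the indices align cleanly for $i = 1, \dots, n$.

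An equivalent route avoids the subspace bookkeeping by working with Gram matrices: from $\os(M)^2 \mI \geq M^\top M$ one deduces $\os(M)^2 H^\top H \geq H^\top M^\top M H$, and Weyl's monotonicity theorem for the positive semi-definite order then gives $\eta_i(H^\top M^\top M H) \le \os(M)^2 \eta_i(H^\top H)$ for every index, where $\eta_i$ denotes the $i$-th smallest eigenvalue; taking square roots recovers the inequality. I expect no genuine obstacle here. The only two points that demand care are fixing the increasing-order convention so that the min--max formula produces an \emph{upper} bound on the $i$-th smallest singular value, and observing that the symmetry of $H$ is in fact not used in this estimate—it merely guarantees that $H$ is square, which is what makes the index range $i = 1, \dots, n$ the natural one.
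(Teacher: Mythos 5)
Your proof is correct and takes essentially the same route as the paper's: the Courant--Fischer min--max characterization of the $i$-th smallest singular value combined with the elementary bound $\|MHx\| \leq \os(M)\|Hx\|$, with the minimum over $i$-dimensional subspaces taken at the end. Your side remarks are also accurate---the symmetry of $H$ is only used in the paper to write $\sigma_i(H)$ via eigenvalues of $H^2$ rather than $H^\top H$, and your alternative argument via $\os(M)^2 H^\top H \geq H^\top M^\top M H$ and eigenvalue monotonicity under the positive semi-definite order is a valid variant of the same idea.
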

\begin{proof}
  See Appendix \ref{appendix_min_max}.
\end{proof}
\begin{table*}[t]
  \caption{\cite{alghunaim2020decentralized} Existing distributed unconstrained optimization algorithms unified by PUDA. $L\in \mR^{n\times n}$ and $W\in \mR^{n\times n}$ are the Laplacian and mixing matrices associated with $\mathcal{G}$, respectively, and $c>0$ is a tunable constant. The last column denotes the number of communication rounds of the corresponding algorithm at each iteration.}
  \label{tab1}
  \centering
  \begin{tabular}{cccccc}
    \toprule
    \multicolumn{2}{c}{Existing algorithms} & $B^2$                                                 & $C$                  & $D$                  & Communication rounds     \\
    \midrule
    \multirow{3}{*}{CTA algorithms}         & DIGing\cite{qu2017harnessing,nedic2017achieving}      & $(\mI-W)^2$          & $\mI-W^2$            & $\mI$                & 2 \\
    ~                                       & EXTRA \cite{shi2015extra}                             & $\frac{1}{2}(\mI-W)$ & $\frac{1}{2}(\mI-W)$ & $\mI$                & 1 \\
    ~                                       & DLM \cite{ling2015dlm}                                & $c\alpha L$          & $c\alpha L$          & $\mI$                & 1 \\
    ~                                       & P2D2 \cite{alghunaim2019linearly}                     & $\frac{c}{2}(\mI-W)$ & $\frac{1}{2}(\mI-W)$ & $\mI$                & 1 \\
    \midrule
    \multirow{4}*{ATC algorithms}           & Aug-DGM \cite{xu2015augmented,nedic2017geometrically} & $(\mI-W)^2$          & $0$                  & $W^2$                & 2 \\
    ~                                       & ATC tracking \cite{di2016next,scutari2019distributed} & $(\mI-W)^2$          & $\mI-W$              & $W$                  & 2 \\
    ~                                       & Exact diffusion \cite{yuan2018exact}                  & $\frac{1}{2}(\mI-W)$ & $0$                  & $\frac{1}{2}(\mI+W)$ & 1 \\
    ~                                       & NIDS \cite{li2019decentralized}                       & $c(\mI-W)$           & $0$                  & $\mI-c(\mI-W)$       & 1 \\
    \bottomrule
  \end{tabular}
\end{table*}
\section{Algorithm Design} \label{design}
Let $d=\sum_{i=1}^n d_i$ and $\mx = [x_1\T, \cdots, x_n\T]\T \in \mR^d$, \cref{pro2} can be reformulated as
\begin{equation} \label{mP} \tag{P3}
  \begin{aligned}
    \min_{\mx \in \mR^d} \  & f(\mx) + g(\mx) + h(A\mx), \\
  \end{aligned}
\end{equation}
where $f(\mx) = \sum_{i=1}^n f_i(x_i)$, $g(\mx) = \sum_{i=1}^n g_i(x_i)$, and $A = [A_1, \cdots, A_n] \in \mR^{p \times d}$. Let $\mf(\mx) = f(\mx) + g(\mx)$, and introduce a slack variable $y \in \mR^p$, then we can rewrite \cref{mP} as
\begin{equation} \label{1613}
  \begin{aligned}
    \min_{\mx \in \mR^d, y \in \mR^p} \  & \mf(\mx) + h(y) \\
    \text{s.t.} \                        & A\mx - y = 0.
  \end{aligned}
\end{equation}
The corresponding saddle-point problem is given by
\eqe{
  &\min_{\mx \in \mR^d, y \in \mR^p} \max_{\lambda \in \mR^p} \mf(\mx) + h(y) + \dotprod{\lambda, A\mx - y} \\
  = &\max_{\lambda \in \mR^p}\lt(\min_{\mx \in \mR^d} \mf(\mx) + \lambda\T A\mx + \min_{y \in \mR^p} h(y) - \lambda\T y\rt) \\
  = &\max_{\lambda \in \mR^p}\min_{\mx \in \mR^d} \underbrace{\mf(\mx) + \lambda\T A\mx - h^*(\lambda)}_{\cL(\mx, \lambda)},
}
where $\lambda \in \mR^p$ is the Lagrange multiplier, $h^*(\lambda) = \sup_{y \in \mR^p} \lambda\T y - h(y)$ is the Fenchel conjugate of $h$, and $\cL$ is the Lagrangian function of \cref{mP}.
Throughout the work, we assume that the following assumptions hold.
\begin{assumption} \label{existence_saddle_point}
  There exists at least a saddle point to $\cL$.
\end{assumption}
Under \cref{existence_saddle_point}, \cref{mP} is equivalent to the saddle-point problem
\eqe{ \label{saddle_point_problem}
  \min_{\mx \in \mR^d}\max_{\lambda \in \mR^p} \ \mf(\mx) + \lambda\T A\mx - h^*(\lambda),
}
that is, if $(\mx^*, \lambda^*)$ solves \cref{saddle_point_problem}, then $\mx^*$ solves \cref{mP}. Consequently, we are able to solve \cref{mP} via solving \cref{saddle_point_problem} alternatively.
\begin{remark}
  Recall that the solution set of \cref{mP} is nonempty, thus \cref{existence_saddle_point} holds if the strong duality holds for \cref{mP}, which can be guaranteed by certain constraint qualifications. A popular one is Slater's condition \cite{boyd2004convex}, which, when applied to \cref{1613}, states that the strong duality holds for \cref{mP} if \cref{mP} is convex and there exists a point $\mx \in \mathbf{relint}(\mathcal{D}(g))$ such that $A\mx \in \mathbf{relint}(\mathcal{D}(h))$, where $\mathcal{D}(\cdot)$ denotes the domain of a function and $\mathbf{relint}(\cdot)$ represents the relative interior \cite{boyd2004convex} of a set. Notice that $\mathbf{relint}(\mR^p) = \mR^p$, hence \cref{existence_saddle_point} naturally holds if $\mathbf{relint}(\mathcal{D}(g)) \neq \emptyset$ and $\mathcal{D}(h) = \mR^p$.
\end{remark}
\begin{assumption} \label{G}
  The network topology among agents is an undirected and connected graph.
\end{assumption}

\begin{assumption} \label{convex}
  The cost functions satisfy
  \begin{enumerate}[(i)]
    \item $f_i$ is $\mu_i$-strongly convex and $l_i$-smooth for $l_i \geq \mu_i > 0$, $\forall i \in \mathcal{N}$;
    \item $g_i$ and $h$ are proper convex and lower semi-continuous, $\forall i \in \mathcal{N}$.
  \end{enumerate}
\end{assumption}
According to \cref{convex}, it is obvious that $f$ is $\mu$-strongly convex and $l$-smooth, where $\mu = \min_{i \in \mathcal{N}}{\mu_i}$ and $l = \max_{i \in \mathcal{N}}{l_i}$.
As an efficient method for solving saddle-point problems, the primal-dual (proximal) gradient algorithm (PGA) has various versions \cite{arrow1958studies,zhu2008efficient,chambolle2011first}. In this work, we consider the one proposed in \cite{chambolle2011first}, which is given as
\eqe{ \label{pga}
  \mx\+     & = \prox_{\alpha g}\lt(\mx^k - \alpha(\nabla f(\mx^k) + A\T\lambda^k)\rt), \\
  \lambda\+ & = \prox_{\beta h^*}\lt(\lambda^k + \beta A\lt(\mx\+ + \theta(\mx\+ - \mx^k)\rt)\rt),
}
where $\alpha, \beta > 0$ and $\theta \geq 0$.
However, we cannot directly employ PGA to solve \cref{mP} in a decentralized manner, since the update of $\lambda$ requires the globally coupled term $A\mx\+=\sum_{i=1}^{n}A_ix_i\+$. To overcome this challenge, we propose an efficient decentralized variant of PGA, called NPGA, based on the dual perspective.

We begin by observing that the dual function of \cref{mP} can be decomposed as
\eqe{
  \phi(\lambda) &= \inf_{\mx \in \mR^d} f(\mx) + g(\mx) + \lambda\T A\mx - h^*(\lambda) \\
  &= \sum_{i=1}^n\lt(\inf_{x_i \in \mR^{d_i}} f_i(x_i) + g_i(x_i) + \lambda\T A_ix_i\rt) - h^*(\lambda) \\
  &= \sum_{i=1}^n \phi_i(\lambda) - h^*(\lambda),
  \nonumber
}
where $\phi_i(\lambda) = \inf_{x_i \in \mR^{d_i}} f_i(x_i) + g_i(x_i) + \lambda\T A_ix_i$.
Consequently, we can reformulate the dual problem of \cref{mP} as
\eqe{ \label{dual1}
  \max_{\lambda \in \mR^{p}} \ \sum_{i=1}^n \phi_i(\lambda) - h^*(\lambda),
}
which corresponds to the classical decentralized unconstrained optimization problem \cref{uc} (with an extra term $h^*(\lambda)$ that is always convex but possibly nonsmooth). It is well known that \cref{uc} has been widely studied in recent years and numerous effective algorithms has been proposed to solve it, including DIGing \cite{qu2017harnessing,nedic2017achieving}, exact first-order algorithm (EXTRA) \cite{shi2015extra}, decentralized linearized alternating direction method of multipliers (DLM) \cite{ling2015dlm}, proximal primal-dual diffusion (P2D2) \cite{alghunaim2019linearly}, augmented distributed gradient
methods (Aug-DGM) \cite{xu2015augmented,nedic2017geometrically}, ATC tracking \cite{di2016next,scutari2019distributed}, exact diffusion \cite{yuan2018exact}, network independent step-size (NIDS) \cite{li2019decentralized}.

Note that \cref{existence_saddle_point} implies that the strong duality holds for \cref{mP}, thus we can employ numerous distributed unconstrained optimization algorithms to solve \cref{dual1}. Once \cref{dual1} is solved, solving \cref{mP} becomes straightforward. The question lies in selecting the most appropriate algorithm to solve \cref{dual1} among the existing distributed unconstrained optimization algorithms. Although any algorithm is theoretically feasible, the performance of different algorithms may vary significantly depending on the specific scenario. Therefore, we are interested in a general primal-dual algorithm framework called the proximal unified decentralized algorithm (PUDA) \cite{alghunaim2020decentralized}.
PUDA offers flexibility by allowing the selection of different network matrices $B \in \mathbb{R}^{n \times n}$, $C \in \mathbb{R}^{n \times n}$, and $D \in \mathbb{R}^{n \times n}$, which are associated with the network topology among the agents. By appropriately choosing these matrices, PUDA can recover many existing distributed unconstrained optimization algorithms, as shown in \cref{tab1}. The detailed derivation of the equivalence between PUDA and existing algorithms can be found in \cite{alghunaim2020decentralized}.
\begin{remark} \label{rem1}
  For an undirected and connected graph, its corresponding mixing matrix $W=[w_{ij}]\in \mR^{n\times n}$ satisfies \cite{shi2015extra}
  \begin{enumerate}[(i)]
    \item $w_{ij}>0$ if $i = j$ or $(i,j) \in \mathcal{E}$, otherwise $w_{ij} = 0$;
    \item $W=W\T$;
    \item $\Null(\mI-W) = \Span(\1_n)$.
  \end{enumerate}
  Given the above three properties, we can easily verify that $-\mI < W \leq \mI$ \cite{shi2015extra}. There are various methods to construct a mixing matrix $W$ satisfying these properties, and one commonly used approach is the Laplacian method \cite{xiao2004fast}:
  $$W = \mI-\frac{L}{\tau},$$
  where $\tau=\max_{i\in \mathcal{N}}e_i+c$, $e_i$ represents the degree of agent $i$, and $c>0$ is a constant.
\end{remark}

\begin{table*}[t]
  \caption{Different versions of NPGA. The parameters used in this table correspond to those in \cref{tab1}.}
  \label{tab2}
  \centering
  \begin{tabular}{ccccc}
    \toprule
    Versions             & $B^2$                & $C$                  & $D$                  & Communication rounds \\
    \midrule
    NPGA-DIGing          & $(\mI-W)^2$          & $\mI-W^2$            & $\mI$                & 2                    \\
    NPGA-EXTRA           & $\frac{1}{2}(\mI-W)$ & $\frac{1}{2}(\mI-W)$ & $\mI$                & 1                    \\
    NPGA-DLM             & $c\beta L$           & $c\beta L$           & $\mI$                & 1                    \\
    NPGA-P2D2            & $\frac{c}{2}(\mI-W)$ & $\frac{1}{2}(\mI-W)$ & $\mI$                & 1                    \\
    NPGA-Aug-DGM         & $(\mI-W)^2$          & $0$                  & $W^2$                & 2                    \\
    NPGA-ATC tracking    & $(\mI-W)^2$          & $\mI-W$              & $W$                  & 2                    \\
    NPGA-Exact diffusion & $\frac{1}{2}(\mI-W)$ & $0$                  & $\frac{1}{2}(\mI+W)$ & 1                    \\
    NPGA-NIDS            & $c(\mI-W)$           & $0$                  & $\mI-c(\mI-W)$       & 1                    \\
    NPGA-I               & $\mI-W$              & $0$                  & $W^2$                & 2                    \\
    NPGA-II              & $\mI-W$              & $\mI-W$              & $W$                  & 2                    \\
    \bottomrule
  \end{tabular}
\end{table*}

In the following, we will show how NPGA is designed based on a variant of PUDA.
Define $\ml = [\lambda_1\T, \cdots, \lambda_n\T]\T \in \mR^{np}$, we can easily verify that \cref{dual1} is equivalent to
\eqe{ \label{dual2}
  \max_{\ml \in \mR^{np}} \ &\mphi(\ml) - \mh(\ml), \\
  \text{s.t.} \ & \lambda_i = \lambda_j, \ i = 1, \cdots, n,
}
where $\mphi(\ml) = \sum_{i=1}^n \phi_i(\lambda_i)$ and $\mh(\ml) = \frac{1}{n} \sum_{i=1}^n h^*(\lambda_i)$. Let $B \in \mR^{n\times n}$, $C \in \mR^{n\times n}$, and $D \in \mR^{n\times n}$ be symmetric matrices associated with the network topology among agents. For \cref{dual2}, our variant of PUDA is implemented as
\eqe{ \label{puda}
  \mv\+ =& \ml^k - \mC\ml^k - \mB\my^k \\
  &+\beta\lt(\nabla\mphi(\ml^k) + \theta\lt(\nabla\mphi(\ml^k)-\nabla\mphi(\ml^{k-1})\rt)\rt), \\
  \my\+ =& \my^k + \gamma \mB\mv\+, \\
  \ml\+ =& \prox_{\beta\mh}\lt(\mD\mv\+\rt),
}
where $\mB = B \otimes \mI_p$, $\mC = C \otimes \mI_p$, $\mD = D \otimes \mI_p$, $\beta, \gamma > 0$, and $\theta \geq 0$.
\begin{remark}
  The differences between \cref{puda} and the original PUDA are as follows:
  \begin{enumerate}[(i)]
    \item Inspired by the PGA proposed in \cite{chambolle2011first}, we replace $\nabla\mphi(\ml^k)$ with $\nabla\mphi(\ml^k) + \theta\lt(\nabla\mphi(\ml^k)-\nabla\mphi(\ml^{k-1})\rt)$ in the primal update, which is quite useful to improve the actual performance of NPGA.
    \item We introduce a step-size $\gamma$ in the dual update, while in the original PUDA, it is always set to $1$. In \cref{convergence}, we will see that the condition $\gamma < 1$ is crucial to guarantee the linear convergence of NPGA under weaker conditions.
  \end{enumerate}
\end{remark}

Define
$$x_i^*(\lambda_i) = \arg \min_{x_i \in \mR^{d_i}}\Big\{f_i(x_i) + g_i(x_i) + \lambda_i\T A_ix_i\Big\},$$
when $f_i$ is strictly convex, $x_i^*(\lambda_i)$ is always unique, then $\phi_i$ is differentiable and \cite{bertsekas1997nonlinear}
\eqe{
  \nabla \phi_i(\lambda_i) = A_ix_i^*(\lambda_i).
  \nonumber
}
As a result, $\mphi$ is differentiable, and we have
\eqe{
  \nabla \mphi(\ml) =& [\nabla \phi_1(\lambda_1)\T, \cdots, \nabla \phi_n(\lambda_n)\T]\T \\
  =& \mA\mx^*(\ml),
  \nonumber
}
where $\mA = \text{diag}(A_1, \cdots, A_n)$. Let
\eqe{
  \mx^*(\ml) =& [x_1^*(\lambda_1)\T, \cdots, x_n^*(\lambda_n)\T]\T \\
  =& \arg \min_{\mx \in \mR^{d}}\lt\{f(\mx) + g(\mx) + \ml\T\mA\mx\rt\},
  \nonumber
}
then \cref{puda} can be unfolded as
\eqe{ \label{da2}
  \mx^*(\ml^k) =& \arg \min_{\mx \in \mR^{d}}\lt\{f(\mx) + g(\mx) + \ml^{k\T}\mA\mx\rt\}, \\
  \mv\+ =& \ml^k - \mC\ml^k - \mB\my^k \\
  &+ \beta\mA\lt(\mx^*(\ml^k)+\theta\lt(\mx^*(\ml^k)-\mx^*(\ml^{k-1})\rt)\rt), \\
  \my\+ =& \my^k + \gamma \mB\mv\+, \\
  \ml\+ =& \prox_{\beta\mh}\lt(\mD\mv\+\rt).
}
At each iteration of \cref{da2}, we have to solve a subproblem exactly to obtain the dual gradient $\mA\mx^*(\ml^k)$. However, solving an optimization problem exactly is often costly and sometimes even impossible \cite{devolder2014first}. To address this issue, an inexact version of \cref{da2}, i.e., NPGA, is further designed:
\begin{subequations} \label{npga}
  \begin{align}
    \mx\+       & = \prox_{\alpha g}\lt(\mx^k - \alpha(\nabla f(\mx^k) + \mA\T\ml^k)\rt), \label{npga_x} \\
    \mv\+       & = \ml^k - \mC\ml^k - \mB\my^k + \beta\mA\hat{\mx}\+, \label{npga_v}                    \\
    \my\+       & = \my^k + \gamma \mB\mv\+, \label{npga_z}                                              \\
    \ml\+       & = \prox_{\beta\mh}\lt(\mD\mv\+\rt), \label{npga_l}                                     \\
    \hat{\mx}\+ & = \mx\+ + \theta(\mx\+ - \mx^k),
  \end{align}
\end{subequations}
where we replace the exact dual gradient with an inexact one updated by single-step proximal gradient descent.
Therefore, NPGA can be seen as an inexact version of \cref{da2}. Alternatively, NPGA can also be seen as a decentralized version of PGA, where \cref{npga_x} and \cref{npga_v}-\cref{npga_l} corresponds to the primal and dual updates of PGA, respectively. Also note that \cref{npga_v}-\cref{npga_l} can be regraded as a special case of PGA, thus NPGA includes two PGAs and one is nested in the other one, which is why it is named NPGA.

\begin{remark}
  As mentioned before, by choosing different $B$, $C$, and $D$, PUDA can recover many existing distributed unconstrained optimization algorithms. As an inexact dual version of PUDA, NPGA does inherit this property of PUDA, and we can also obtain many different versions of NPGA by choosing different $B$, $C$, and $D$, as shown in \cref{tab2}, where the combinations of matrices for  NPGA-I and NPGA-II are proposed in the experiment section of \cite{alghunaim2020decentralized}. It is also worthy noting that both DCDA and DCPA are special cases of NPGA: DCDA corresponds to NPGA-Exact diffusion with $\theta=0$ and $\gamma=1$, while DCPA corresponds to NPGA-P2D2 with $c=1$ and $\theta=1$.
  In numerical experiments, it has been shown that the convergence rates of different versions of NPGA usually have significant differences. This indicates that NPGA could provide an opportunity to design more efficient distributed algorithms for \cref{pro2}.
\end{remark}

\begin{remark}
  To illustrate the decentralized implementation of NPGA in practice, we can take NPGA-EXTRA as an example.
  Notice that NPGA can be rewritten as
  \eqe{ \label{npga2}
    \mx\+ =& \prox_{\alpha g}\lt(\mx^k - \alpha(\nabla f(\mx^k) + \mA\T\ml^k)\rt), \\
    \mv\+ =& (\mI - \mC)(\ml^k-\ml^{k-1}) + (\mI - \gamma \mB^2)\mv^k \\
    &+ \beta\mA(\hat{\mx}\+-\hat{\mx}^k), \\
    \ml\+       =& \prox_{\beta\mh}\lt(\mD\mv\+\rt), \\
    \hat{\mx}\+ =& \mx\+ + \theta(\mx\+ - \mx^k),
  }
  by substituting the specific choice of $B^2$, $C$, and $D$ for NPGA-EXTRA (as listed in \cref{tab2}) into \cref{npga2}, we have
  \eqe{
    \mv\+ =& \frac{\mI+\mW}{2}\lt(\ml^k-\ml^{k-1}\rt) + \frac{(2-\gamma)\mI+\gamma\mW}{2}\mv^k \\
    &+\beta\mA(\hat{\mx}\+-\hat{\mx}^k), \\
    \ml\+ =& \prox_{\beta\mh}\lt(\mv\+\rt),
    \nonumber
  }
  where $\mW=W\otimes \mI_p$. Consequently, NPGA-EXTRA can be implemented as
  \eqe{
  x_i\+ =& \prox_{\alpha g_i}\lt(x_i^k - \alpha(\nabla f_i(x_i^k)+A_i\T\lambda_i^k)\rt), \\
  v_i\+ =& \frac{1}{2}(\lambda_i^k - \lambda_i^{k-1}) + \frac{1}{2}\sum_{j=1}^nw_{ij}\lt(\lambda_j^k - \lambda_j^{k-1} + \gamma v_j^k\rt) \\
  &+ \frac{2-\gamma}{2}v_i^k + \beta A_i(\hat{x}_i\+-\hat{x}_i^k), \\
  \lambda_i\+ =& \prox_{\frac{\beta}{n}h^*}\lt(v_i\+\rt), \\
  \hat{x}_i\+ =& x_i\+ + \theta(x_i\+ - x_i^k),
  \nonumber
  }
  for agent $i$, recall that $w_{ij}=0$ if $i \neq j$ and agent $j$ is not the neighbor of agent $i$. Also note that agent $i$ only needs to send $\lambda_i^k - \lambda_i^{k-1} + \gamma v_i^k$ to its neighbors, hence NPGA-EXTRA requires only one round of communication per iteration. The decentralized implementations and the corresponding communication rounds of other versions of NPGA can be derived in a similar way.
\end{remark}

\section{Convergence Analysis} \label{convergence}
In this section, we analyze the linear convergence of NPGA under two cases: (i) $g_i=0$ and $A$ has full row rank; (ii) $h$ is $l_h$-smooth.
Before proceeding with the analysis, we introduce a necessary assumption that applies to both cases.
\begin{assumption} \label{condition_main_inequality}
  The network matrices $B$, $C$, and $D$ satisfy
  \begin{enumerate}[(i)]
    \item $C = 0$ or $\Null(C) = \Span(\1_n)$;
    \item $\Null(B) = \Span(\1_n)$;
    \item $0 \leq C < \mI$, $B^2 \leq \mI$;
    \item $D$ is a symmetric doubly stochastic matrix.
  \end{enumerate}
\end{assumption}

\begin{remark} \label{remark_condition_main_inequality}
  With the three properties of $W$ given in \cref{rem1}, $\Null(L) = \Span(\1_n)$, and $\Null(B) = \Span(\1_n)$ is equivalent to $\Null(B^2) = \Span(\1_n)$, we can easily verify that all algorithms listed in \cref{tab2} satisfy (i), (ii), and (iv). Since $L \geq 0$, we can choose small enough $c$ and $\beta$ such that NPGA-DLM satisfies (iii).
  Note that $-\mI < W \leq \mI$, then we have $0 \leq \mI - W < 2\mI$, hence NPGA-EXTRA and NPGA-Exact diffusion both satisfy (iii), NPGA-P2D2 and NPGA-NIDS satisfy (iii) for $c \leq 1$ and $c \leq \frac{1}{2}$, respectively. Let $W' \triangleq \frac{1}{2}(\mI+W)$, obviously $0 < W' \leq \mI$. By replacing $W$ with $W'$, all the remaining algorithms can satisfy (iii).
\end{remark}

The following lemma establishes the relation between the fixed point of NPGA and the solution of the saddle-point problem \cref{saddle_point_problem}.
\begin{lemma} \label{fixed_point}
  Assume \cref{condition_main_inequality} holds, $(\mx^*, \lambda^*)$ is a solution of the saddle-point problem \cref{saddle_point_problem} if and only if there exist $\mv^* \in \mR^{np}$ and $\my^* \in \mR^{np}$ such that $(\mx^*, \mv^*, \my^*, \ml^*)$ is a fixed point of NPGA, where $\ml^* = \1_n \otimes \lambda^*$.
\end{lemma}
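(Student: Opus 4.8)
The plan is to prove both implications by reducing the saddle-point optimality conditions and the NPGA fixed-point equations to subdifferential inclusions and matching them term by term. Writing $\cL(\mx, \lambda) = \mf(\mx) + \lambda\T A\mx - h^*(\lambda)$ and using $\partial \mf = \nabla f + \partial g$, the pair $(\mx^*, \lambda^*)$ solves \cref{saddle_point_problem} if and only if the two inclusions $-\nabla f(\mx^*) - A\T\lambda^* \in \partial g(\mx^*)$ and $A\mx^* \in \partial h^*(\lambda^*)$ hold. On the NPGA side, I would set all iterates equal to their fixed-point values, so that $\hat{\mx}^* = \mx^*$ since $\theta(\mx^*-\mx^*)=0$, and read off the fixed-point equations from \cref{npga_x}, \cref{npga_v}, \cref{npga_z}, and \cref{npga_l}. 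The proximal characterization $p = \prox_{\alpha\psi}(u) \Leftrightarrow \frac1\alpha(u-p)\in\partial\psi(p)$ turns \cref{npga_x} into $-\nabla f(\mx^*) - \mA\T\ml^* \in \partial g(\mx^*)$; and since $\ml^* = \1_n\otimes\lambda^*$ gives $\mA\T\ml^* = A\T\lambda^*$, this already coincides with the first saddle-point inclusion, independently of the network matrices.

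The crux is the dual block. First I would use \cref{npga_z}: at a fixed point $\gamma\mB\mv^* = 0$, and since $\gamma>0$ and $\Null(\mB) = \Null(B)\otimes\mR^p = \Span(\1_n)\otimes\mR^p$ by \cref{condition_main_inequality}(ii), this forces $\mv^* = \1_n\otimes w^*$ for some $w^*\in\mR^p$. Next, \cref{condition_main_inequality}(i) yields $C\1_n = 0$ in either case, hence $\mC\ml^* = 0$, and \cref{condition_main_inequality}(iv) gives $D\1_n = \1_n$, so $\mD\mv^* = \mv^*$. With these simplifications, \cref{npga_v} reduces to the single equation $\mB\my^* = \1_n\otimes(\lambda^*-w^*) + \beta\mA\mx^*$.

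The key step is to analyze this last equation along $\Null(\mB) = \Span(\1_n)\otimes\mR^p$. Since $\mB$ is symmetric, its range equals $\Null(\mB)^\perp$, so the equation is solvable in $\my^*$ if and only if its right-hand side is orthogonal to every $\1_n\otimes u$. Computing this inner product and using $(\1_n\otimes u)\T\mA\mx^* = u\T A\mx^*$, the solvability condition collapses to $n(\lambda^*-w^*) + \beta A\mx^* = 0$, i.e. $w^* = \lambda^* + \frac\beta n A\mx^*$. Finally, since $\mh = \frac1n\sum_i h^*$ is separable, \cref{npga_l} reads componentwise as $\lambda^* = \prox_{\frac\beta n h^*}(w^*)$, giving $\frac n\beta(w^*-\lambda^*)\in\partial h^*(\lambda^*)$; substituting the value of $w^*$ yields exactly $A\mx^*\in\partial h^*(\lambda^*)$, the second saddle-point inclusion.

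With these computations assembled, both directions follow. For ``fixed point $\Rightarrow$ saddle point'', the steps above directly recover the two inclusions. For ``saddle point $\Rightarrow$ fixed point'', I would define $w^* = \lambda^* + \frac\beta n A\mx^*$ and $\mv^* = \1_n\otimes w^*$, let $\my^*$ be any solution of $\mB\my^* = \1_n\otimes(\lambda^*-w^*)+\beta\mA\mx^*$ (whose existence is guaranteed because the orthogonality condition holds precisely by the choice of $w^*$), and then verify the four fixed-point equations in reverse. I expect the main obstacle to be the range/orthogonality argument for the $\my^*$-equation: one must simultaneously show that the consensus component of \cref{npga_v} pins down $w^*$ (hence the dual inclusion) and that the orthogonal component remains in $\col(\mB)$ so that $\my^*$ exists, all while correctly tracking the $\frac1n$ factors produced by $\mh = \frac1n\sum_i h^*$.
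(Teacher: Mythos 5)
Your proposal is correct and takes essentially the same approach as the paper: the same four fixed-point equations, the same identification $\mv^* = \1_n \otimes v^*$ with $v^* = \lambda^* + \frac{\beta}{n}A\mx^*$, and the same translation of the proximal steps into the saddle-point inclusions. Your range/orthogonality solvability argument for $\mB\my^* = \ml^* - \mv^* + \beta\mA\mx^*$ is precisely the paper's step of left-multiplying by $\1_n\T \otimes \mI_p$ and invoking $\col(\mB) = \Null(\1_n\T \otimes \mI_p)$, just phrased via $\col(\mB) = \Null(\mB)^\perp$.
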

\begin{proof}
  See Appendix \ref{appendix_fixed_point}.
\end{proof}

The optimality condition of the saddle-point problem \cref{saddle_point_problem} is given by
\begin{subequations} \label{opt_condition}
  \begin{align}
    0      & \in \nabla f(\mx^*) + A\T\lambda^* + \partial g(\mx^*), \label{opt_condition_a} \\
    A\mx^* & \in \partial h^*(\lambda^*), \label{opt_condition_b}
  \end{align}
\end{subequations}
then we can verify that the uniqueness of the solution of \cref{saddle_point_problem} can be guaranteed by combining the strong convexity of $f$ with the condition that $A$ has full row rank or $h^*$ is strongly convex, which implies that the solution of \cref{saddle_point_problem} is unique for both cases.
Let $(\mx^*, \mv^*, \my^*, \ml^*)$ be a fixed point of \cref{npga}, we can easily verify that $\mx^*$, $\mv^*$, and $\ml^*$ are unique, but $\my^*$ is not necessarily unique due to the singularity of $\mB$. Nevertheless, $\my^*$ is indeed unique in $\col(\mB)$, denoted as $\my^*_c$. If $\my^0=0$, then we have $\my^k \in \col(\mB)$ for all $k \geq 0$, which implies that, given the premise that NPGA converges, $\my^k$ can only converge to $\my^*_c$. Consequently, we only need to prove that NPGA linearly converge to $(\mx^*, \mv^*, \my^*_c, \ml^*)$ given the initial condition $\my^0=0$.

Define the error variables as
\eqe{
  \tx^k =& \mx^k-\mx^*, \\
  \tv^k =& \mv^k-\mv^*, \\
  \ty^k =& \my^k-\my^*_c, \\
  \tw^k =& \ml^k-\ml^*,
  \nonumber
}
then the error system of NPGA can be written as
\eqe{ \label{npga_error}
  \tx\+ =& \tx^k - \alpha(\nabla f(\mx^k)-\nabla f(\mx^*)) - \alpha\mA\T\tw^k \\
  &- \alpha\lt(\mS_g(\mx\+)-\mS_g(\mx^*)\rt), \\
  \tv\+ =& \tw^k - \mC\tw^k - \mB\ty^k + \beta \mA\lt(\tx\+ + \theta(\tx\+-\tx^k)\rt), \\
  \ty\+ =& \ty^k + \gamma \mB\tv\+, \\
  \tw\+ =& \prox_{\beta\mh}\lt(\mD\mv\+\rt) - \prox_{\beta\mh}\lt(\mD\mv^*\rt).
}
The following convergence analysis will be based on the above error system.

\subsection{Case I: $g_i=0$ and $A$ Has Full Row Rank}
\begin{assumption} \label{A}
  $g_i=0$ and $A$ has full row rank.
\end{assumption}
\begin{remark}
  As mentioned before, \cref{A} is the weakest known condition for decentralized algorithms to linearly solve \cref{mP} with a non-smooth $h$. Before NPGA, there are two algorithms that can achieve linear convergence under the condition that $A$ has full row rank: (i) DCPA, which is a special case of NPGA; (ii) IDEA, which considers a special case of \cref{mP}, i.e., $h$ is an indicator function corresponding to the equality constraint.
\end{remark}

Since $g_i = 0$, the error system \cref{npga_error} can be simplified to
\eqe{ \label{npga_error_g0}
  \tx\+ =& \tx^k - \alpha\lt(\nabla f(\mx^k)-\nabla f(\mx^*)\rt) - \alpha\mA\T\tw^k, \\
  \tv\+ =& \tw^k - \mC\tw^k - \mB\ty^k + \beta\mA\lt(\tx\+ + \theta(\tx\+-\tx^k)\rt), \\
  \ty\+ =& \ty^k + \gamma \mB\tv\+, \\
  \tw\+ =& \prox_{\beta\mh}\lt(\mD\mv\+\rt) - \prox_{\beta\mh}\lt(\mD\mv^*\rt).
}

In order to establish the linear convergence of NPGA under the condition that $A$ has full row rank, we need to introduce the following key lemma.
\begin{lemma} \label{mA}
  Assume \cref{A} holds, $M \in \mR^{n \times n}$ is a symmetric double stochastic matrix and $H \in \mR^{n \times n}$ is a positive semi-definite matrix which satisfies $\Null(H) = \Span(\1_n)$, then $\mathbf{M}\mA\mA\T\mathbf{M}+c\mathbf{H} > 0$, where $\mathbf{M} = M \otimes \mI_p$, $\mathbf{H} = H \otimes \mI_p$, and $c>0$ is a constant.
\end{lemma}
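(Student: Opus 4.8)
The plan is to exploit the fact that $\mathbf{M}\mA\mA\T\mathbf{M}$ and $c\mathbf{H}$ are both positive semi-definite, so that their sum is positive definite if and only if their null spaces meet only at the origin. Indeed, for any $x$ with $x\T(\mathbf{M}\mA\mA\T\mathbf{M}+c\mathbf{H})x=0$, non-negativity of each quadratic form forces $\mathbf{M}\mA\mA\T\mathbf{M}x=0$ and $\mathbf{H}x=0$ separately; conversely a common null vector witnesses the failure of positive definiteness. Hence it suffices to show $\Null(\mathbf{M}\mA\mA\T\mathbf{M})\cap\Null(\mathbf{H})=\{0\}$.

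First I would characterize the two null spaces. Since $\Null(H)=\Span(\1_n)$, the Kronecker factorization gives $\Null(\mathbf{H})=\Null(H\otimes\mI_p)=\{\1_n\otimes u:u\in\mR^p\}$, so every vector in $\Null(\mathbf{H})$ has the consensus form $\1_n\otimes u$. On the other side, $x\T\mathbf{M}\mA\mA\T\mathbf{M}x=\norm{\mA\T\mathbf{M}x}^2$, so $x\in\Null(\mathbf{M}\mA\mA\T\mathbf{M})$ exactly when $\mA\T\mathbf{M}x=0$.

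The key step is to intersect the two conditions. Take $x=\1_n\otimes u\in\Null(\mathbf{H})$ and impose $\mA\T\mathbf{M}x=0$. Because $M$ is (symmetric) doubly stochastic it satisfies $M\1_n=\1_n$, so $\mathbf{M}x=(M\1_n)\otimes u=\1_n\otimes u=x$; the averaging operator acts as the identity on the consensus subspace. Unwinding the block-diagonal structure $\mA=\text{diag}(A_1,\cdots,A_n)$ then yields $\mA\T(\1_n\otimes u)=[A_1\T u;\cdots;A_n\T u]=A\T u$, which is precisely the action of the stacked matrix $A\T$. Thus $\mA\T\mathbf{M}x=A\T u$, and the requirement $\mA\T\mathbf{M}x=0$ becomes $A\T u=0$.

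Finally, \cref{A} supplies that $A$ has full row rank, so $A\T$ has trivial null space and $A\T u=0$ forces $u=0$, whence $x=0$. This establishes $\Null(\mathbf{M}\mA\mA\T\mathbf{M})\cap\Null(\mathbf{H})=\{0\}$ and the claim follows. I expect the only real (and modest) obstacle to be the identification of $\mA\T\mathbf{M}x$ on the consensus subspace with $A\T u$: the whole argument hinges on recognizing that $M\1_n=\1_n$ collapses $\mathbf{M}$ to the identity there, after which the block-diagonal $\mA\T$ reassembles exactly into the full-row-rank matrix $A\T$. Note that $c>0$ plays no role beyond keeping $c\mathbf{H}$ positive semi-definite, and that positive semi-definiteness of $M$ is never required.
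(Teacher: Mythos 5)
Your proof is correct and follows essentially the same route as the paper's: both arguments split the quadratic form into its two positive semi-definite parts, use $\Null(\mathbf{H})$ to force the consensus form $\1_n\otimes u$, invoke $M\1_n=\1_n$ so that $\mathbf{M}$ acts as the identity on that subspace, and then reassemble the block-diagonal $\mA$ into $A$ so that full row rank (equivalently $AA\T>0$, which is what the paper writes) forces $u=0$. The only cosmetic difference is that you phrase the last step via injectivity of $A\T$ while the paper phrases it via positive definiteness of $AA\T$.
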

\begin{proof}
  See Appendix \ref{appendix_mA}.
\end{proof}

We now present an inequality that plays a crucial role in the subsequent analysis.
\begin{lemma} \label{main_inequality}
  Given \cref{condition_main_inequality,A,existence_saddle_point,G,convex}, let $\my^0=0$, $\theta \geq 0$, and the positive step-sizes satisfy
  \eqe{ \label{step_npga_g0}
    \alpha < \frac{1}{l(1+2\theta)}, \ \beta \leq \frac{\mu}{\os^2(\mA)\lt(\frac{1}{1-\os(C)}+\theta\rt)}, \ \gamma < 1,
  }
  then we have
  \eqe{
  &c_1\norm{\tx\+}^2 + c_2\norm{\tv\+}^2_{\mI-\gamma \mB^2} + c_3\norm{\ty\+}^2 \\
  \leq& \delta_1c_1\norm{\tx^k}^2 + c_2\norm{\tw^k}^2_{\mI-\mC-\alpha\beta \mA\mA\T} \\
  &+ 2\theta \alpha\beta\os(C)\os^2(\mA)c_2\norm{\tw^k}^2_{\mC} + \delta_3c_3\norm{\ty^k}^2,
  }
  where $c_1 = 1-\alpha\beta\os^2(\mA)\lt(\frac{1}{1-\os(C)}+\theta\rt) > 0$, $c_2 = \frac{\alpha}{\beta}$, $c_3 = \frac{\alpha}{\beta\gamma}$, $\delta_1 = 1-\alpha\mu(1-\alpha l(1+2\theta)) \in (0, 1)$, $\delta_3 = 1-\gamma\us^2(B) \in (0, 1)$.
\end{lemma}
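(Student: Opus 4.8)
The plan is to prove the inequality by an energy-bookkeeping argument: I track each block of the simplified error system \cref{npga_error_g0}, expand the left-hand side, and reassemble the pieces through completions of squares and Young's inequalities, exploiting the fact that the constants $c_1,c_2,c_3$ are chosen exactly so that the dangerous cross terms telescope. Throughout I write $\hat{\tx}\+ = \tx\+ + \theta(\tx\+-\tx^k)$, and the only external input is \cref{convex_inequality}.

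First I would dispose of the $\tv\+$ and $\ty\+$ blocks. Substituting $\ty\+ = \ty^k + \gamma\mB\tv\+$ into $c_3\norm{\ty\+}^2$ and using $c_3 = c_2/\gamma$, the $\gamma c_2\norm{\mB\tv\+}^2$ term produced by this expansion cancels the $-\gamma c_2\norm{\mB\tv\+}^2$ coming from $c_2\norm{\tv\+}^2_{\mI-\gamma\mB^2}$, leaving $c_2\norm{\tv\+}^2 + \frac{c_2}{\gamma}\norm{\ty^k}^2 + 2c_2\dotprod{\ty^k, \mB\tv\+}$. Writing $\tv\+ = u - \mB\ty^k$ with $u = (\mI-\mC)\tw^k + \beta\mA\hat{\tx}\+$, the cross terms between $u$ and $\mB\ty^k$ cancel (since $\mB$ is symmetric), collapsing this to $c_2\norm{u}^2 - c_2\norm{\ty^k}^2_{\mB^2} + \frac{c_2}{\gamma}\norm{\ty^k}^2$. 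Because $\my^0 = 0$ forces $\ty^k \in \col(\mB) = (\Null(\mB))^{\perp}$, the definition of $\us(B)$ gives $\norm{\ty^k}^2_{\mB^2} \geq \us^2(B)\norm{\ty^k}^2$, which converts the $\ty^k$ contribution into exactly $\delta_3 c_3\norm{\ty^k}^2$ with $\delta_3 = 1-\gamma\us^2(B)$. This reduces the claim to the core inequality $c_1\norm{\tx\+}^2 + c_2\norm{u}^2 \leq \delta_1 c_1\norm{\tx^k}^2 + c_2\norm{\tw^k}^2_{\mI-\mC-\alpha\beta\mA\mA\T} + 2\theta\alpha\beta\os(C)\os^2(\mA)c_2\norm{\tw^k}^2_{\mC}$.

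Next I would attack this core inequality. Expanding $c_2\norm{u}^2 = \frac{\alpha}{\beta}\norm{\tw^k}^2_{(\mI-\mC)^2} + 2\alpha\dotprod{(\mI-\mC)\tw^k, \mA\hat{\tx}\+} + \alpha\beta\norm{\mA\hat{\tx}\+}^2$ and using $(\mI-\mC)^2 \preceq \mI-\mC$ (valid since $0 \leq \mC \prec \mI$) accounts for the $\frac{\alpha}{\beta}\norm{\tw^k}^2_{\mI-\mC}$ part of the target. The decisive point is that $c_1 = 1-\alpha\beta\os^2(\mA)(\frac{1}{1-\os(C)}+\theta)$ is tuned so that the positive term $\alpha\beta\norm{\mA\hat{\tx}\+}^2 \leq \alpha\beta\os^2(\mA)\norm{\hat{\tx}\+}^2$ can be absorbed into $c_1\norm{\tx\+}^2$; after splitting $\hat{\tx}\+ = \tx\+ + \theta(\tx\+-\tx^k)$ and completing the square in $\norm{\tx\+}^2 + 2\alpha\dotprod{\mA\T\tw^k, \tx\+}$, the first relation of \cref{npga_error_g0}, namely $\tx\+ + \alpha\mA\T\tw^k = \tx^k - \alpha(\nabla f(\mx^k)-\nabla f(\mx^*))$, turns this combination into $\norm{\tx^k - \alpha(\nabla f(\mx^k)-\nabla f(\mx^*))}^2$ minus exactly the $\alpha^2\norm{\tw^k}^2_{\mA\mA\T}$ needed to complete the weight $\mI-\mC-\alpha\beta\mA\mA\T$. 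The residual cross terms carrying the factors $\mC$ and $\theta$ are then controlled by Young's inequality; choosing the splitting weights in terms of $\os(C)$ and bounding $\norm{\tw^k}^2_{\mC^2} \leq \os(C)\norm{\tw^k}^2_{\mC}$ is what simultaneously produces the stray term $2\theta\alpha\beta\os(C)\os^2(\mA)c_2\norm{\tw^k}^2_{\mC}$ and explains the appearance of $\frac{1}{1-\os(C)}$ and the $+\theta$ inside $c_1$. To close the $\tx^k$ estimate I would apply the strong convexity and smoothness bounds of \cref{convex_inequality} to $\norm{\tx^k-\alpha(\nabla f(\mx^k)-\nabla f(\mx^*))}^2$ and to the $\theta$-weighted pieces, obtaining a bound of the form $(1-\alpha\mu(2-\alpha l(1+2\theta)))\norm{\tx^k}^2 = (\delta_1-\alpha\mu)\norm{\tx^k}^2$; the bound $\beta \leq \mu/(\os^2(\mA)(\frac{1}{1-\os(C)}+\theta))$ is precisely what upgrades $\delta_1-\alpha\mu$ to $\delta_1 c_1$, while $\alpha < 1/(l(1+2\theta))$ keeps both $\delta_1 \in (0,1)$ and $c_1 > 0$.

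I expect the main obstacle to be the general-$\theta$, general-$\mC$ bookkeeping in the third paragraph. In the clean case $\theta=0$, $\mC=0$ the constant $c_1$ is exactly the value making $c_1\norm{\tx\+}^2 + \alpha\beta\os^2(\mA)\norm{\tx\+}^2 = \norm{\tx\+}^2$, and the completion of the square is immediate; with $\theta>0$ and $\mC\neq 0$ one must instead distribute the $\frac{1}{1-\os(C)}$ and $\theta$ parts of $1-c_1$ across the $(\mI-\mC)$-splitting and the $\theta(\tx\+-\tx^k)$ terms so that every cross term assembles into the stated constants rather than into a merely valid but looser inequality. In particular, the absorption of $\alpha\beta\norm{\mA\hat{\tx}\+}^2$ into $c_1\norm{\tx\+}^2$ must be arranged before, not after, completing the square, so that the primal identity can still be invoked cleanly; getting the Young weights to land on exactly these coefficients is the crux of the computation.
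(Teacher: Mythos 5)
Your proposal is correct and follows essentially the same route as the paper's proof: the same cancellation of the $\mB\ty^k$ cross terms using $c_3 = c_2/\gamma$, the same completion of the square through the primal update $\tx\+ = \tx^k - \alpha(\nabla f(\mx^k)-\nabla f(\mx^*)) - \alpha\mA\T\tw^k$, the same $\os(C)$-weighted Jensen/Young step producing the $\frac{1}{1-\os(C)}$ factor, the bound $\norm{\tw^k}^2_{\mC^2} \leq \os(C)\norm{\tw^k}^2_{\mC}$, the strong-convexity upgrade from $\delta_1-\alpha\mu$ to $\delta_1 c_1$ via the $\beta$-condition, and $\ty^k \in \col(\mB)$ for $\delta_3$. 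The one place where your wording under-specifies the mechanism --- precisely the crux you flag at the end --- is the $\theta$-weighted cross term $2\theta\alpha\dotprod{\mA\T(\mI-\mC)\tw^k, \tx\+-\tx^k}$: the paper does not treat it with a generic Young's inequality (which would leave an unabsorbable positive multiple of $\norm{\tx\+-\tx^k}^2$ and a $\norm{\tw^k}^2_{(\mI-\mC)^2}$-type surplus), but applies the same polarization-plus-primal-update identity to it, which is what simultaneously generates the negative $-\theta\norm{\tx\+-\tx^k}^2$ absorbing $\theta(1+\theta)\alpha\beta\norm{\mA(\tx\+-\tx^k)}^2$, the term $2\theta\alpha^2\norm{\nabla f(\mx^k)-\nabla f(\mx^*)}^2$ responsible for the $(1+2\theta)$ in $\delta_1$, and the term $2\theta\alpha^2\norm{\mA\T\mC\tw^k}^2$ that becomes the stray $\mC$-term.
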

\begin{proof}
  See Appendix \ref{appendix_main_inequality}.
\end{proof}

To obtain the linear convergence of NPGA, we need to impose another condition on $C$, i.e., $\Null(C) = \Span(\1_n)$, which excludes the versions of NPGA with $C = 0$.
\begin{assumption} \label{null_C}
  The network matrix $C$ satisfies
  $$\Null(C) = \Span(\1_n).$$
\end{assumption}

Based on \cref{main_inequality} and \cref{null_C}, we can obtain the following result.
\begin{lemma} \label{main_inequality_C}
  Under the same condition with \cref{main_inequality}, and assume \cref{null_C} holds, then we have
  \eqe{
  &c_1\norm{\tx\+}^2 + c_2\norm{\tv\+}^2_{\mI-\gamma \mB^2} + c_3\norm{\ty\+}^2 \\
  \leq& \delta_1c_1\norm{\tx^k}^2 + \delta_2c_2\norm{\tw^k}^2 + \delta_3c_3\norm{\ty^k}^2,
  }
  where $E=\mA\mA\T+\frac{1}{2\alpha\beta}\mC$, $\delta_2 = 1-\alpha\beta\ue(E) \in (0, 1)$, $c_1$, $c_2$, $c_3$, $\delta_1$, and $\delta_3$ are defined in \cref{main_inequality}.
\end{lemma}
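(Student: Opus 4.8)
The plan is to obtain \cref{main_inequality_C} directly from \cref{main_inequality} by collapsing the two $\tw^k$-terms on its right-hand side, $c_2\norm{\tw^k}^2_{\mI-\mC-\alpha\beta\mA\mA\T}$ and $2\theta\alpha\beta\os(C)\os^2(\mA)c_2\norm{\tw^k}^2_\mC$, into the single term $\delta_2 c_2\norm{\tw^k}^2$. Since the $\tx\+$, $\tv\+$, $\ty\+$ summands on the left and the $\tx^k$, $\ty^k$ summands on the right are already identical in both lemmas, it suffices to prove
\[
  \norm{\tw^k}^2_{\mI-\mC-\alpha\beta\mA\mA\T} + 2\theta\alpha\beta\os(C)\os^2(\mA)\norm{\tw^k}^2_\mC \leq \delta_2\norm{\tw^k}^2 .
\]
First I would apply \cref{mA} with the symmetric doubly stochastic choice $M=\mI$ and $H=C$: \cref{null_C} supplies $\Null(C)=\Span(\1_n)$ and \cref{condition_main_inequality} supplies $C\geq 0$, so with $c=\tfrac{1}{2\alpha\beta}$ we get $E=\mA\mA\T+\tfrac{1}{2\alpha\beta}\mC>0$. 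Hence $\ue(E)>0$ and, crucially, $E\geq \ue(E)\mI$, which already gives $\delta_2=1-\alpha\beta\ue(E)<1$.

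Next I would recast the displayed bound as a matrix inequality on $\mR^{np}$ (legitimate since we only need an upper bound on a quadratic form). Writing $\kappa:=2\theta\alpha\beta\os(C)\os^2(\mA)$ and substituting $\delta_2=1-\alpha\beta\ue(E)$, the claim is equivalent to
\[
  \alpha\beta\ue(E)\mI \leq \alpha\beta\mA\mA\T + (1-\kappa)\mC .
\]
Because $\alpha\beta E=\alpha\beta\mA\mA\T+\tfrac12\mC$, the right-hand side equals $\alpha\beta E+(\tfrac12-\kappa)\mC$; so once $\kappa\leq\tfrac12$ is established, the extra term $(\tfrac12-\kappa)\mC$ is positive semi-definite, the right-hand side dominates $\alpha\beta E\geq\alpha\beta\ue(E)\mI$, and the inequality follows.

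The main obstacle is exactly the estimate $\kappa\leq\tfrac12$ from the step-sizes \cref{step_npga_g0}, which is delicate because a crude bound is insufficient: using only $\theta\beta\os^2(\mA)\leq\mu$ and $\alpha<1/l$ yields $\kappa<2/(1+2\theta)$, exceeding $\tfrac12$ for small $\theta$. Instead I would retain the coupling between $\alpha$ and $\theta$: the bound on $\beta$ gives $\beta\os^2(\mA)\leq \mu(1-\os(C))/(1+\theta(1-\os(C)))$, while $\alpha<1/(l(1+2\theta))$ gives $\alpha\theta<1/(2l)$; multiplying these and invoking $\os(C)(1-\os(C))\leq\tfrac14$, $\mu\leq l$, and $0\leq\os(C)<1$ (from $0\leq C<\mI$) produces $\kappa<\tfrac14\leq\tfrac12$. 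Finally, to confirm $\delta_2>0$ I would evaluate the matrix inequality just proved along the consensus direction $\1_n\otimes u$, on which $\mC$ vanishes, obtaining $\ue(E)\leq\us^2(A)/n\leq\os^2(\mA)$, so that $\alpha\beta\ue(E)\leq\alpha\beta\os^2(\mA)<1$ by $c_1>0$; this gives $\delta_2\in(0,1)$ and completes the argument.
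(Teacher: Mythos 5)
Your proposal is correct and follows essentially the same route as the paper's proof: both reduce the claim to collapsing the two $\tw^k$-terms of \cref{main_inequality} into $\delta_2 c_2\norm{\tw^k}^2$, invoke \cref{mA} (with $M=\mI$, $H=C$, $c=\frac{1}{2\alpha\beta}$) to obtain $E>0$, exploit the identity $\alpha\beta\mA\mA\T+(1-\kappa)\mC=\alpha\beta E+(\frac{1}{2}-\kappa)\mC$ with $\kappa=2\theta\alpha\beta\os(C)\os^2(\mA)$, and finish by checking $\kappa<\frac{1}{2}$ and $\delta_2\in(0,1)$. The only differences are cosmetic choices of supporting estimates: you obtain $\kappa<\frac{1}{4}$ by multiplying the step-size bounds and using $\os(C)(1-\os(C))\leq\frac{1}{4}$, whereas the paper bounds $\kappa<\Phi(1/\sqrt{2})<\frac{1}{2}$ with $\Phi(\theta)=\frac{2\theta}{(1+2\theta)(1+\theta)}$, and you bound $\ue(E)\leq\us^2(A)/n$ along the consensus direction, whereas the paper uses Weyl's inequality to get $\ue(E)\leq\os^2(\mA)$ --- both suffice.
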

\begin{proof}
  See Appendix \ref{appendix_main_inequality_C}.
\end{proof}

Now we present the linear convergence result of NPGA under Case I.
\begin{theorem} \label{convergence_npga_g0}
  Given \cref{condition_main_inequality,A,existence_saddle_point,G,convex,null_C}, let $\my^0=0$, $\theta \geq 0$, and the positive step-sizes satisfy
  \eqe{ \label{step_theorem_1}
    \alpha < \frac{1}{l(1+2\theta)}, \ \beta \leq \frac{\mu}{\os^2(\mA)\lt(\frac{1}{1-\os(C)}+\theta\rt)}, \\
    \gamma < \min\lt\{1, \frac{\alpha\beta\ue(E)}{\os^2(B)}\rt\},
  }
  then there exists $\delta \in (0, 1)$ such that
  \eqe{ \label{R-linear}
    \norm{\mx^k-\mx^*}^2 = \mathcal{O}\lt(\delta^k\rt),
  }
  where
  \eqe{
    \delta = \max\Bigg\{1-\alpha\mu(1-\alpha l(1+2\theta)), \frac{1-\alpha\beta\ue(E)}{1-\gamma\os^2(B)}, \\1-\gamma\us^2(B)\Bigg\}.
    \nonumber
  }
\end{theorem}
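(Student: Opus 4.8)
The plan is to turn the one-step estimate of \cref{main_inequality_C} into a genuine geometric contraction of a single Lyapunov function and then iterate. The difficulty is a mismatch: the left-hand side of \cref{main_inequality_C} carries the dual term $c_2\norm{\tv\+}^2_{\mI-\gamma\mB^2}$, whereas the right-hand side carries $\delta_2 c_2\norm{\tw^k}^2$. Since $\tv$ and $\tw$ are distinct iterates and the weight $\mI-\gamma\mB^2$ is not the identity, I first have to reconcile these two terms before any telescoping is possible; this reconciliation is the main (though not deep) obstacle. I would accomplish it with two elementary spectral bounds.

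First, from \cref{npga_l} and the fixed-point identity $\ml^*=\prox_{\beta\mh}(\mD\mv^*)$ supplied by \cref{fixed_point}, the error recursion reads $\tw\+=\prox_{\beta\mh}(\mD\mv\+)-\prox_{\beta\mh}(\mD\mv^*)$. Invoking non-expansiveness of the proximal operator and then $\mD^2\leq\mI$ (which holds because $D$ is symmetric doubly stochastic, hence has spectrum in $[-1,1]$, by \cref{condition_main_inequality}), I get $\norm{\tw\+}^2\leq\norm{\mD\tv\+}^2=\norm{\tv\+}^2_{\mD^2}\leq\norm{\tv\+}^2$. Second, since $B^2\leq\mI$ forces $\os^2(B)\leq 1$, the weight obeys $\mI-\gamma\mB^2\geq(1-\gamma\os^2(B))\mI$, and $\gamma<1\leq 1/\os^2(B)$ guarantees $1-\gamma\os^2(B)>0$. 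Chaining the two bounds yields $c_2\norm{\tv\+}^2_{\mI-\gamma\mB^2}\geq(1-\gamma\os^2(B))c_2\norm{\tw\+}^2$, which replaces the awkward $\tv$-term by the $\tw$-term I want.

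With this in hand I would define the Lyapunov function $V^k=c_1\norm{\tx^k}^2+(1-\gamma\os^2(B))c_2\norm{\tw^k}^2+c_3\norm{\ty^k}^2$. Substituting the previous inequality into \cref{main_inequality_C} gives $V\+\leq\delta_1 c_1\norm{\tx^k}^2+\delta_2 c_2\norm{\tw^k}^2+\delta_3 c_3\norm{\ty^k}^2$, and a term-by-term comparison against $\delta V^k$ shows $V\+\leq\delta V^k$ exactly when $\delta=\max\{\delta_1,\frac{\delta_2}{1-\gamma\os^2(B)},\delta_3\}$, matching the stated rate. The only remaining point is $\delta<1$: the first and third entries lie in $(0,1)$ by \cref{main_inequality_C}, while the middle entry is strictly below $1$ iff $\delta_2<1-\gamma\os^2(B)$, i.e. $\gamma\os^2(B)<\alpha\beta\ue(E)$, which is precisely the extra restriction $\gamma<\alpha\beta\ue(E)/\os^2(B)$ imposed in \cref{step_theorem_1}. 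Finally, iterating $V\+\leq\delta V^k$ gives $V^k\leq\delta^k V^0$, and because $V^k\geq c_1\norm{\tx^k}^2$ with $c_1>0$, the claimed estimate $\norm{\mx^k-\mx^*}^2=\mathcal{O}(\delta^k)$ follows at once.
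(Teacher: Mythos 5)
Your proposal is correct and follows essentially the same route as the paper's proof: both invoke \cref{main_inequality_C}, use non-expansiveness of the proximal operator together with $\os(\mD)\leq 1$ and $\mI-\gamma\mB^2\geq(1-\gamma\os^2(B))\mI$ to convert the $\tv$-term into a $\tw$-term, and then contract a weighted Lyapunov function with rate $\delta=\max\{\delta_1,\delta_2/(1-\gamma\os^2(B)),\delta_3\}$. Your Lyapunov function is just the paper's scaled by $\omega=1-\gamma\os^2(B)$, and you correctly identify that the extra restriction $\gamma<\alpha\beta\ue(E)/\os^2(B)$ is exactly what makes the middle entry strictly less than one.
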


\begin{proof}
  Recall that $\mD$ is a symmetric doubly stochastic matrix, which implies that $\|\mD\|<1$, then applying the non-expansive property of the proximal operator gives that
  \eqe{
    \norm{\tw\+}^2 =& \norm{\prox_{\beta\mh}\lt(\mD\mv\+\rt) - \prox_{\beta\mh}\lt(\mD\mv^*\rt)}^2 \\
    \leq& \norm{\mD\tv\+}^2 \\
    \leq& \norm{\tv\+}^2.
    \nonumber
  }
  Note that the condition of \cref{convergence_npga_g0} is sufficient for \cref{main_inequality_C}, we have
  \eqe{
  &c_1\norm{\tx\+}^2 + \lt(1-\gamma\os^2(B)\rt)c_2\norm{\tw\+}^2 + c_3\norm{\ty\+}^2 \\
  \leq& c_1\norm{\tx\+}^2 + \lt(1-\gamma\os^2(B)\rt)c_2\norm{\tv\+}^2 + c_3\norm{\ty\+}^2 \\
  \leq& c_1\norm{\tx\+}^2 + c_2\norm{\tv\+}^2_{\mI-\gamma \mB^2} + c_3\norm{\ty\+}^2 \\
  \leq& \delta_1c_1\norm{\tx^k}^2 + \delta_2c_2\norm{\tw^k}^2 + \delta_3c_3\norm{\ty^k}^2,
  }
  let $\omega = 1-\gamma\os^2(B)$, obviously $\omega \in (0, 1)$, we then have
  \eqe{
    &\frac{c_1}{\omega}\norm{\tx\+}^2 + c_2\norm{\tw\+}^2 + \frac{c_3}{\omega}\norm{\ty\+}^2 \\
    \leq& \delta_1\frac{c_1}{\omega}\norm{\tx^k}^2 + \frac{\delta_2}{\omega}c_2\norm{\tw^k}^2 + \delta_3\frac{c_3}{\omega}\norm{\ty^k}^2 \\
    \leq& \delta\lt(\frac{c_1}{\omega}\norm{\tx^k}^2 + c_2\norm{\tw^k}^2 + \frac{c_3}{\omega}\norm{\ty^k}^2\rt),
  }
  where $\delta = \max\lt(\delta_1, \frac{\delta_2}{\omega}, \delta_3\rt)$. It follows that
  \eqe{
    \frac{c_1}{\omega}\norm{\tx^k}^2 \leq \delta^k\lt(\frac{c_1}{\omega}\norm{\tx^0}^2 + c_2\norm{\tw^0}^2 + \frac{c_3}{\omega}\norm{\ty^0}^2\rt),
  }
  the step-sizes condition \cref{step_npga_g0} implies that $\frac{\delta_2}{\omega} \in (0, 1)$, thus $\delta \in (0, 1)$, which completes the proof.
\end{proof}

\begin{remark}
  Given a sequence $\{x^k\}$ that converges to $x^*$, the convergence is called (i) R-linear if there exists $\lambda \in (0, 1)$ such that $\norm{x^k - x^*} \leq C\lambda^k$ for $k \geq 0$, where $C$ is a positive constant; (ii) Q-linear if there exists $\lambda \in (0, 1)$ such that $\frac{\norm{x^{k+1} - x^*}}{\norm{x^k - x^*}} \leq \lambda$ for $k \geq 0$ \cite{nedic2017achieving}.
  Therefore, \cref{R-linear} implies that $\mx^k$ converges to $\mx^*$ with an R-linear rate of $\sqrt{\delta}$.
\end{remark}

If the following assumption is further made, we can obtain a tighter convergence rate than \cref{convergence_npga_g0}.
\begin{assumption} \label{DB}
  The network matrices $B$ and $D$ satisfy
  $$D^2 \leq \mI - B^2.$$
\end{assumption}

\begin{remark}
  We can easily see that all CTA versions of NPGA (where $D = \mI$) cannot satisfy \cref{DB}, only ATC ones have the possibility. Since \cref{convergence_npga_g0_DB} requires both \cref{null_C,DB}, we only analyze ATC versions of NPGA with $C \neq 0$, i.e., NPGA-ATC tracking and NPGA-II. Recall that $W \leq \mI$, which implies that $W^2 \leq W$, then we can easily verify that NPGA-ATC tracking and NPGA-II both satisfy \cref{DB} according to \cref{tab2}.
\end{remark}

\begin{theorem} \label{convergence_npga_g0_DB}
  Given \cref{condition_main_inequality,A,existence_saddle_point,G,convex,null_C,DB}, let $\my^0=0$, $\theta \geq 0$, and the positive step-sizes satisfy
  \eqe{ \label{step_npga_g0_DB}
    \alpha < \frac{1}{l(1+2\theta)}, \ \beta \leq \frac{\mu}{\os^2(\mA)\lt(\frac{1}{1-\os(C)}+\theta\rt)}, \ \gamma < 1,
  }
  then there exists $\delta \in (0, 1)$ such that
  $$\norm{\mx^k-\mx^*}^2 = \mathcal{O}\lt(\delta^k\rt),$$
  where
  \eqe{
    \delta = \max\lt\{1-\alpha\mu(1-\alpha l(1+2\theta)), 1-\alpha\beta\ue(E), 1-\gamma\us^2(B)\rt\}.
    \nonumber
  }
\end{theorem}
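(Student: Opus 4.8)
The plan is to mirror the Lyapunov argument used for \cref{convergence_npga_g0}, but to exploit \cref{DB} at the single place where that earlier proof was lossy. Recall that in \cref{convergence_npga_g0} the middle term of the rate carries the extra denominator $1-\gamma\os^2(B)$ because, after invoking \cref{main_inequality_C}, one bounds $\norm{\tv\+}^2_{\mI-\gamma\mB^2}$ from below by $(1-\gamma\os^2(B))\norm{\tv\+}^2$ and then divides the whole inequality by $\omega=1-\gamma\os^2(B)$. I would keep everything up to and including the application of \cref{main_inequality_C}: note that the step-size hypotheses of the present theorem are exactly those of \cref{main_inequality} and hence of \cref{main_inequality_C}, so the latter applies verbatim and yields
\[
  c_1\norm{\tx\+}^2 + c_2\norm{\tv\+}^2_{\mI-\gamma\mB^2} + c_3\norm{\ty\+}^2 \leq \delta_1 c_1\norm{\tx^k}^2 + \delta_2 c_2\norm{\tw^k}^2 + \delta_3 c_3\norm{\ty^k}^2 .
\]

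The decisive step, and the one that makes the sharper rate possible, is to convert the $\tv\+$ term on the left into a $\tw\+$ term \emph{without} paying the factor $\omega$. As in the earlier proof, the non-expansiveness of $\prox_{\beta\mh}$ gives $\norm{\tw\+}^2 \leq \norm{\mD\tv\+}^2 = \norm{\tv\+}^2_{\mD^2}$. Now I would invoke \cref{DB} in its Kronecker-lifted form $\mD^2 \leq \mI-\mB^2$, together with $\gamma<1$ and $\mB^2\geq 0$ (so that $(1-\gamma)\mB^2\geq 0$, equivalently $\mI-\mB^2 \leq \mI-\gamma\mB^2$), to obtain the ordering $\mD^2 \leq \mI-\mB^2 \leq \mI-\gamma\mB^2$. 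Hence $\norm{\tw\+}^2 \leq \norm{\tv\+}^2_{\mI-\gamma\mB^2}$ directly, and substituting this on the left of the displayed inequality gives
\[
  c_1\norm{\tx\+}^2 + c_2\norm{\tw\+}^2 + c_3\norm{\ty\+}^2 \leq \delta_1 c_1\norm{\tx^k}^2 + \delta_2 c_2\norm{\tw^k}^2 + \delta_3 c_3\norm{\ty^k}^2 .
\]

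Finally I would read this as a one-step contraction of the now \emph{unweighted} Lyapunov function $V^k = c_1\norm{\tx^k}^2 + c_2\norm{\tw^k}^2 + c_3\norm{\ty^k}^2$, namely $V\+ \leq \delta V^k$ with $\delta=\max\{\delta_1,\delta_2,\delta_3\}$, iterate to get $V^k \leq \delta^k V^0$, and extract $\norm{\tx^k}^2 \leq c_1^{-1}\delta^k V^0 = \mathcal{O}(\delta^k)$. To conclude I only need $\delta\in(0,1)$: \cref{main_inequality_C} already certifies $\delta_1,\delta_3\in(0,1)$ and $\delta_2=1-\alpha\beta\ue(E)\in(0,1)$, the positivity of $\ue(E)$ being where the full-row-rank hypothesis on $A$ enters through \cref{mA}. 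I expect no genuine obstacle beyond the observation in the second paragraph: the entire improvement of this theorem over \cref{convergence_npga_g0} comes from \cref{DB} letting me dominate $\mD^2$ by $\mI-\gamma\mB^2$ directly, which replaces the coarse eigenvalue bound and the ensuing division by $\omega$, and this is precisely why the restriction on $\gamma$ relaxes to the single requirement $\gamma<1$ while the middle rate term sharpens from $\frac{1-\alpha\beta\ue(E)}{1-\gamma\os^2(B)}$ to $1-\alpha\beta\ue(E)$.
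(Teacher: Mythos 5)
Your proposal is correct and follows essentially the same route as the paper's own proof: both invoke \cref{main_inequality_C} under the given step-size conditions, use the non-expansiveness of $\prox_{\beta\mh}$ together with the chain $\mD^2 \leq \mI-\mB^2 \leq \mI-\gamma\mB^2$ from \cref{DB} to bound $\norm{\tw\+}^2 \leq \norm{\tv\+}^2_{\mI-\gamma\mB^2}$, and then read the result as a contraction of the unweighted Lyapunov function with $\delta=\max\{\delta_1,\delta_2,\delta_3\}$. Your accompanying explanation of why this removes the $1-\gamma\os^2(B)$ denominator present in \cref{convergence_npga_g0} is exactly the mechanism at work in the paper.
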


\begin{proof}
  According to \cref{DB}, we have
  $$\mD^2 \leq \mI-\mB^2 \leq \mI-\gamma\mB^2,$$
  it follows that
  \eqe{
  \norm{\tw\+}^2 =& \norm{\prox_{\beta\mh}\lt(\mD\mv\+\rt) - \prox_{\beta\mh}\lt(\mD\mv^*\rt)}^2 \\
  \leq& \norm{\mD\tv\+}^2 \\
  \leq& \norm{\tv\+}^2_{\mI-\gamma\mB^2}.
  \nonumber
  }
  Note that the condition of \cref{convergence_npga_g0_DB} is sufficient for \cref{main_inequality_C}, we have
  \eqe{
  &c_1\norm{\tx\+}^2 + c_2\norm{\tw\+}^2 + c_3\norm{\ty\+}^2 \\
  \leq& c_1\norm{\tx\+}^2 + c_2\norm{\tv\+}^2_{\mI-\gamma \mB^2} + c_3\norm{\ty\+}^2 \\
  \leq& \delta_1c_1\norm{\tx^k}^2 + \delta_2c_2\norm{\tw^k}^2 + \delta_3c_3\norm{\ty^k}^2 \\
  \leq& \delta\lt(c_1\norm{\tx^k}^2 + c_2\norm{\tw^k}^2 + c_3\norm{\ty^k}^2\rt),
  \nonumber
  }
  where $\delta = \max\{\delta_1,\delta_2,\delta_3\} \in (0, 1)$. It follows that
  \eqe{
    c_1\norm{\tx^k}^2 \leq \delta^k\lt(c_1\norm{\tx^0}^2 + c_2\norm{\tw^0}^2 + c_3\norm{\ty^0}^2\rt),
  }
  which completes the proof.
\end{proof}

\begin{remark}
  Compared to \cref{convergence_npga_g0}, the improved convergence result in this case relaxes the upper bound on $\gamma$ to $1$ and provides a tighter convergence rate of $\max\lt\{1-\alpha\mu(1-\alpha l(1+2\theta)), 1-\alpha\beta\ue(E), 1-\gamma\us^2(B)\rt\}$. It is worth noting that the convergence of DCPA under Case I can be encompassed by \cref{convergence_npga_g0} with negligible differences. Therefore, we can conclude that the ATC versions of NPGA, which satisfy \cref{DB}, achieve tighter convergence rates than DCPA.
\end{remark}

\cref{convergence_npga_g0,convergence_npga_g0_DB} both assume \cref{null_C} holds, which cannot be satisfied by the versions of NPGA with $C=0$. Fortunately, we find that the linear convergence of NPGA can be guaranteed without \cref{null_C}, when $h$ is an indicator function that corresponds to the equality constraint.
\begin{assumption} \label{indicator}
  $h$ is an indicator function given as
  $$h(y)=\lt\{
    \begin{aligned}
      0,      \   & y = b,    \\
      +\infty, \  & y \neq b,
    \end{aligned}\right.$$
  where $b \in \mR^p$ is a const vector.
\end{assumption}

Given \cref{indicator}, it holds that
\eqe{
  h^*(\lambda) =& b\T\lambda, \\
  \prox_{\beta\mh}(\ml) =& \ml - \beta \mb,
  \nonumber
}
where $\mb = \frac{1}{n}\1_n \otimes b$, then the error system \cref{npga_error_g0} can be simplified to
\eqe{ \label{npga_error_g0_indicator}
  \tx\+ =& \tx^k - \alpha\lt(\nabla f(\mx^k)-\nabla f(\mx^*)\rt) - \alpha\mA\T\tw^k, \\
  \tv\+ =& \tw^k - \mC\tw^k - \mB\ty^k + \beta \mA\lt(\tx\+ + \theta(\tx\+-\tx^k)\rt), \\
  \ty\+ =& \ty^k + \gamma \mB\tv\+, \\
  \tw\+ =& \mD\tv\+.
}

Different from \cref{convergence_npga_g0,convergence_npga_g0_DB}, we replace \cref{null_C} with the following much weaker assumption, which can be satisfied by all versions of NPGA listed in \cref{tab2}.
\begin{assumption} \label{DCB}
  The network matrices $B$, $C$, and $D$ satisfy
  $$D(\mI-C)D \leq \mI - B^2.$$
\end{assumption}

\begin{remark}
  We analyze \cref{DCB} for the ATC and CTA versions of NPGA separately:
  \begin{enumerate}[(i)]
    \item For CTA versions (where $D=\mI$), \cref{DCB} simplifies to $B^2 \leq C$. Obviously NPGA-EXTRA, NPGA-DLM satisfy this condition, and NPGA-P2D2 can satisfy it by choosing $c \leq 1$. Recall that $W \leq \mI$, which implies that $W^2 \leq W$, hence DIGing also satisfies \cref{DCB}.
    \item By utilizing the properties $W \leq \mI$ and $W^2 \leq W$, we can easily verify that all the ATC versions of NPGA listed in \cref{tab2} satisfy \cref{DCB}, where NPGA-NIDS needs to choose $c \leq \frac{1}{2}$ to satisfy the condition.
  \end{enumerate}
\end{remark}

\begin{theorem} \label{convergence_npga_g0_indicator}
  Given \cref{condition_main_inequality,A,existence_saddle_point,G,convex,indicator,DCB}, let $\my^0=0$, $\theta = 0$, and the positive step-sizes satisfy
  \eqe{ \label{step_npga_g0_indicator}
    \alpha < \frac{1}{l}, \ \beta \leq \frac{\mu(1-\os(C))}{\os^2(\mA)}, \ \gamma < 1,
  }
  then there exists $\delta \in (0, 1)$ such that
  $$\norm{\mx^k-\mx^*}^2 = \mathcal{O}\lt(\delta^k\rt),$$
  where
  \eqe{
    \delta = \max\lt\{1-\alpha\mu(1-\alpha l), 1-\alpha\beta\ue(F), 1-\gamma\us^2(B)\rt\},
    \nonumber
  }
  $F = \mD\mA\mA\T\mD+\frac{1-\gamma}{\alpha\beta}\mB^2$.
\end{theorem}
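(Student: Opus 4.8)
The plan is to reduce the indicator case to the master estimate of \cref{main_inequality} and then exploit the \emph{exact} linear relation $\tw\+ = \mD\tv\+$ furnished by the error system \cref{npga_error_g0_indicator}. Since $\theta = 0$, the step-sizes in \cref{step_npga_g0_indicator} coincide with those required by \cref{main_inequality} (indeed $\frac{1}{1-\os(C)}+\theta = \frac{1}{1-\os(C)}$ when $\theta=0$), so \cref{main_inequality} is available and, with the $\theta$-term vanishing, gives
\[
  c_1\norm{\tx\+}^2 + c_2\norm{\tv\+}^2_{\mI-\gamma\mB^2} + c_3\norm{\ty\+}^2 \leq \delta_1 c_1\norm{\tx^k}^2 + c_2\norm{\tw^k}^2_{\mI-\mC-\alpha\beta\mA\mA\T} + \delta_3 c_3\norm{\ty^k}^2.
\]
I would take the Lyapunov function $V^k = c_1\norm{\tx^k}^2 + c_2\norm{\tv^k}^2_{\mI-\gamma\mB^2} + c_3\norm{\ty^k}^2$, so that everything reduces to dominating the middle term on the right by $\delta_2\,c_2\norm{\tv^k}^2_{\mI-\gamma\mB^2}$.

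First I would rewrite that term purely in $\tv^k$. Under \cref{indicator} we have $\prox_{\beta\mh}(\ml)=\ml-\beta\mb$, hence $\tw^k=\mD\tv^k$ for all $k\geq 1$, and therefore
\[
  \norm{\tw^k}^2_{\mI-\mC-\alpha\beta\mA\mA\T} = \norm{\tv^k}^2_{\mD(\mI-\mC)\mD-\alpha\beta\mD\mA\mA\T\mD}.
\]
Next I would feed in the lifted form of \cref{DCB}, namely $\mD(\mI-\mC)\mD\leq\mI-\mB^2$, together with the identity $\alpha\beta\mD\mA\mA\T\mD=\alpha\beta F-(1-\gamma)\mB^2$ that follows directly from $F=\mD\mA\mA\T\mD+\frac{1-\gamma}{\alpha\beta}\mB^2$, to obtain the clean matrix bound
\[
  \mD(\mI-\mC)\mD-\alpha\beta\mD\mA\mA\T\mD \leq \mI-\gamma\mB^2-\alpha\beta F.
\]

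The crux is to convert this into a strict contraction factor, and here \cref{mA} is decisive. Taking $M=D$ (symmetric doubly stochastic by \cref{condition_main_inequality}(iv)) and $H=B^2$ (positive semi-definite with $\Null(B^2)=\Span(\1_n)$ by \cref{condition_main_inequality}(ii)), and noting the constant $\frac{1-\gamma}{\alpha\beta}>0$ because $\gamma<1$, \cref{mA} yields $F>0$. Thus $\ue(F)=\eta_1(F)>0$ and $F\geq\ue(F)\mI\geq\ue(F)(\mI-\gamma\mB^2)$, the last inequality using $\gamma\mB^2\geq 0$; rearranging gives exactly $\mI-\gamma\mB^2-\alpha\beta F\leq(1-\alpha\beta\ue(F))(\mI-\gamma\mB^2)=\delta_2(\mI-\gamma\mB^2)$. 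Chaining the three displays bounds the middle right-hand term by $\delta_2 c_2\norm{\tv^k}^2_{\mI-\gamma\mB^2}$, so $V\+\leq\delta V^k$ with $\delta=\max\{\delta_1,\delta_2,\delta_3\}$. Since $\delta_1,\delta_3\in(0,1)$ from \cref{main_inequality} and $\delta_2<1$ because $\ue(F)>0$, we get $\delta\in(0,1)$; iterating from $k=1$ and using $c_1\norm{\tx^k}^2\leq V^k$ yields the claimed R-linear decay of $\norm{\mx^k-\mx^*}^2$.

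I expect the positive-definiteness step to be the main obstacle: with only $F\geq 0$ the factor would degrade to $1$ along the common null directions of $\mB$ and $\mD\mA\mA\T\mD$, destroying linear convergence. What rescues the argument is the interplay of three ingredients specific to this case --- the exact relation $\tw\+=\mD\tv\+$ (from the indicator $h$), which lets the $\tw^k$-weighted term be realigned with the $\tv^k$-based Lyapunov function; the lifted \cref{DCB}, which replaces the stronger \cref{null_C} used in \cref{convergence_npga_g0,convergence_npga_g0_DB}; and \cref{mA}, which supplies a strictly positive $\ue(F)$ and hence a bona fide contraction factor $\delta_2<1$ without requiring $\Null(C)=\Span(\1_n)$.
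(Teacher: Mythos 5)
Your proposal is correct and follows essentially the same route as the paper's proof: apply \cref{main_inequality} with $\theta=0$, substitute $\tw^k=\mD\tv^k$ from the indicator error system, invoke \cref{DCB} together with the definition of $F$ to reach the weight matrix $\mI-\gamma\mB^2-\alpha\beta F$, and use \cref{mA} to get $F>0$ and hence the contraction factor $\delta_2$. The only cosmetic differences are that you fold the paper's two-step estimate (which uses $\delta_2<1$ to reabsorb the $-\gamma\|\tv^k\|^2_{\mB^2}$ term) into the single matrix inequality $\ue(F)(\mI-\gamma\mB^2)\leq\ue(F)\mI\leq F$, and that you skip the paper's Weyl-inequality verification that $\delta_2>0$, which is indeed dispensable since $\delta\geq\delta_1>0$ in any case.
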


\begin{proof}
  Note that $\Null(B) = \Span(\1_n)$ is equivalent to $\Null(B^2) = \Span(\1_n)$ and recall that $D$ is double stochastic, we can verify that $F> 0$ by \cref{mA}, then we have
  \eqe{ \label{l_bound_3}
  \|\tw^k\|^2_{\mI-\alpha\beta F} \leq (1 - \alpha\beta\ue(F))\|\tw^k\|^2.
  }
  Also note that $\os(\mD) \leq 1$, according to \cref{min_max}, we have
  \eqe{
    \os(\mA\T\mD) \leq \os(\mA\T)\os(\mD) \leq \os(\mA).
    \nonumber
  }
  Let $\delta_2 = 1-\alpha\beta\ue(F)$, applying Weyl's inequality \cite{horn2012matrix} gives that
  \eqe{ \label{2002}
    \ue(F) =& \eta_1(F) \\
    \leq& \eta_1\lt(\frac{1-\gamma}{\alpha\beta}\mB^2\rt)+\ove(\mD\mA\mA\T\mD) \\
    =& \os^2(\mA\T\mD) \\
    \leq& \os^2(\mA),
  }
  where the first inequality holds since $\eta_1\lt(\frac{1-\gamma}{\alpha\beta}\mB^2\rt) = 0$.
  According to \cref{step_npga_g0_indicator} and $\mu \leq l$, we have
  \eqe{
    \alpha\beta <& \frac{1-\os(C)}{\os^2(\mA)},
    \nonumber
  }
  combining it with \cref{2002} gives that $\delta_2 \in (0, 1)$.
  Note that the condition of \cref{convergence_npga_g0_indicator} is sufficient for \cref{main_inequality}, then we have
  \eqe{
  &c_1\norm{\tx\+}^2 + c_2\norm{\tv\+}^2_{\mI-\gamma \mB^2} + c_3\norm{\ty\+}^2 \\
  \leq& \delta_1c_1\norm{\tx^k}^2 + c_2\norm{\tw^k}^2_{\mI-\mC-\alpha\beta \mA\mA\T} + \delta_3c_3\norm{\ty^k}^2 \\
  \leq& \delta_1c_1\norm{\tx^k}^2 + c_2\|\tv^k\|^2_{\mD(\mI-\mC-\alpha\beta \mA\mA\T)\mD} + \delta_3c_3\norm{\ty^k}^2 \\
  \leq& \delta_1c_1\norm{\tx^k}^2 + c_2\|\tv^k\|^2_{\mI-\mB^2-\alpha\beta \mD\mA\mA\T\mD} + \delta_3c_3\norm{\ty^k}^2  \\
  \leq& \delta_1c_1\norm{\tx^k}^2 + c_2\|\tv^k\|^2_{\mI-\alpha\beta F-\gamma\mB^2} + \delta_3c_3\norm{\ty^k}^2 \\
  \leq& \delta_1c_1\norm{\tx^k}^2 + \delta_2c_2\|\tv^k\|^2 - c_2\|\tv^k\|^2_{\gamma\mB^2} + \delta_3c_3\norm{\ty^k}^2 \\
  \leq& \delta_1c_1\norm{\tx^k}^2 + \delta_2c_2\|\tv^k\|^2_{\mI-\gamma\mB^2} + \delta_3c_3\norm{\ty^k}^2 \\
  \leq& \delta\lt(c_1\norm{\tx^k}^2 + c_2\|\tv^k\|^2_{\mI-\gamma\mB^2} + c_3\norm{\ty^k}^2\rt),
  \nonumber
  }
  where the second, the third, the fifth, and the sixth inequalities hold due to \cref{npga_error_g0_indicator}, \cref{DCB}, \cref{l_bound_3}, and $\delta_2 < 1$, respectively. It follows that
  \eqe{
    c_1\norm{\tx^k}^2 \leq \delta^k\lt(c_1\norm{\tx^0}^2 + c_2\|\tv^0\|^2_{\mI-\gamma\mB^2} + c_3\norm{\ty^0}^2\rt),
    \nonumber
  }
  which completes the proof.
\end{proof}

\begin{remark}
  As shown in \cref{convergence_npga_g0_indicator}, when \cref{indicator} holds, the linear convergence of NGPA can be guaranteed without assuming $\Null(C) = \Span(\1_n)$. The possibility of $C = 0$ introduces an interesting outcome: the upper bounds of the step-sizes of NPGA are independent of the network topology when $C = 0$, which expands their stability region, surpassing other versions of NPGA, including DCPA. Furthermore, the upper bound of $\gamma$ and the linear convergence rate are both improved compared to \cref{convergence_npga_g0}. Recall DCPA is a special case of NPGA, hence this improvement can also be applied to it. Also note that most of ATC versions of NPGA are with $C=0$, which are excluded in \cref{convergence_npga_g0}. However, as shown in in numerical experiments, ATC algorithms usually have much better performance than CTA ones. Therefore, the improvement of \cref{convergence_npga_g0_indicator} is highly significant and meaningful.
\end{remark}

\subsection{Case II: $h$ Is $l_h$-Smooth}
\begin{assumption} \label{h_smooth}
  $h$ is $l_h$-smooth for $l_h > 0$.
\end{assumption}

\cref{h_smooth} implies that $h^*$ is $\frac{1}{l_h}$-strongly convex, then we can obtain the linear convergence of NPGA without imposing the full rank condition on $A$, as shown in the following theorem.
\begin{theorem} \label{convergence_npga_h_smooth}
  Given \cref{condition_main_inequality,existence_saddle_point,G,convex,h_smooth}, let $\my^0=0$, $0 \leq \theta \leq 1$, and the positive step-sizes satisfy
  \eqe{ \label{step_npga_h_smooth}
    \alpha < \frac{1}{\max\lt\{\frac{2\theta l^2}{\mu}, 2l-\mu\rt\}}, \ \beta \leq \frac{\mu}{\os^2(\mA)\lt(\frac{1}{1-\os(C)}+\theta\rt)}, \\
    \gamma < \min\lt\{1, \frac{\lt(1+\frac{\beta}{nl_h}\rt)^2-1}{\lt(1+\frac{\beta}{nl_h}\rt)^2\os^2(B)}\rt\},
  }
  then there exists $\delta \in (0, 1)$ such that
  $$\norm{\mx^k-\mx^*}^2 = \mathcal{O}\lt(\delta^k\rt),$$
  where
  \eqe{
    \delta = \max\Bigg\{1-\alpha\lt(\mu-2\theta\alpha l^2\rt), \frac{1}{\lt(1+\frac{\beta}{nl_h}\rt)^2\lt(1-\gamma\os^2(B)\rt)}, \\1-\gamma\us^2(B)\Bigg\}.
    \nonumber
  }
\end{theorem}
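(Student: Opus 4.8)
The plan is to assemble a single composite energy inequality for the error system \cref{npga_error} (built around the fixed point characterized in \cref{fixed_point}) and to extract from it a contraction factor $\delta\in(0,1)$. The structural point of Case~II is that the full row rank of $A$, which made the dual block contractive in Case~I, is no longer available or needed: by \cref{h_smooth} the function $h$ is $l_h$-smooth, so $h^*$ (and hence $\mh$) is strongly convex, which turns the proximal dual update from a merely non-expansive map into a strict contraction. This contraction, rather than full rank, is what forces the dual error to zero, and it is the reason \cref{A} can be dropped while a general convex $g$ is retained.

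I would set up two ingredients. First, the dual contraction: since $h$ is convex and $l_h$-smooth, $h^*$ is $\frac{1}{l_h}$-strongly convex, so the resolvent $\prox_{\beta\mh}$ is Lipschitz with modulus $\frac{1}{1+\beta/l_h}<1$; combining this with $\os(\mD)\le 1$ (as $\mD$ is symmetric doubly stochastic, \cref{condition_main_inequality}) and the last line of \cref{npga_error} gives
\[
\begin{aligned}
\norm{\tw\+}^2 &\le \frac{1}{(1+\beta/l_h)^2}\norm{\mD\tv\+}^2 \\
&\le \frac{1}{(1+\beta/l_h)^2}\norm{\tv\+}^2.
\end{aligned}
\]
Second, a Case~II counterpart of \cref{main_inequality} for the full system \cref{npga_error} with a general (possibly nonsmooth) $g$: pushing the primal line through the non-expansiveness of $\prox_{\alpha g}$ and the strong-convexity/smoothness estimates of \cref{convex_inequality}, and carrying the momentum correction $\theta(\tx\+-\tx^k)$ into the $\mv\+$ recursion, should yield
\[
\begin{aligned}
&c_1\norm{\tx\+}^2+c_2\norm{\tv\+}^2_{\mI-\gamma\mB^2}+c_3\norm{\ty\+}^2 \\
&\quad\le \delta_1 c_1\norm{\tx^k}^2+c_2\norm{\tw^k}^2_{M}+\delta_3 c_3\norm{\ty^k}^2,
\end{aligned}
\]
with primal factor $\delta_1=1-\alpha(\mu-2\theta\alpha l^2)$, tracking factor $\delta_3=1-\gamma\us^2(B)$, and a symmetric $M\le\mI$ under \cref{step_npga_h_smooth} (so that $\norm{\tw^k}^2_{M}\le\norm{\tw^k}^2$). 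The momentum correction $\theta(\tx\+-\tx^k)$ is responsible for the $2\theta\alpha^2 l^2$ term in $\delta_1=1-\alpha\mu+2\theta\alpha^2 l^2$.

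Combining the two ingredients is the heart of the argument. On the left I lower-bound $\norm{\tv\+}^2_{\mI-\gamma\mB^2}\ge(1-\gamma\os^2(B))\norm{\tv\+}^2$, which is legitimate because $\mB^2\le\mI$ forces $\os^2(B)\le1$ and $\gamma<1$, and then invoke the dual contraction to obtain $\norm{\tv\+}^2\ge(1+\beta/l_h)^2\norm{\tw\+}^2$. Taking
\[
\begin{aligned}
V^k &= c_1\norm{\tx^k}^2 + c_3\norm{\ty^k}^2 \\
&\quad + c_2(1-\gamma\os^2(B))(1+\beta/l_h)^2\norm{\tw^k}^2,
\end{aligned}
\]
and dividing the dual block by the gained factor $(1-\gamma\os^2(B))(1+\beta/l_h)^2$ turns the composite inequality into $V\+\le\delta V^k$ with $\delta$ equal to the maximum in the theorem statement. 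To confirm $\delta\in(0,1)$ I would check the three entries: $\alpha<\mu/(l^2(1+2\theta))$ keeps $\delta_1\in(0,1)$; $\gamma<1$ keeps $\delta_3\in(0,1)$; and the middle entry is below $1$ exactly because the stated bound $\gamma<\frac{(1+\beta/l_h)^2-1}{(1+\beta/l_h)^2\os^2(B)}$ is equivalent to $(1-\gamma\os^2(B))(1+\beta/l_h)^2>1$. Iterating $V\+\le\delta V^k$ and using $V^k\ge c_1\norm{\tx^k}^2$ then gives $\norm{\mx^k-\mx^*}^2=\mathcal{O}(\delta^k)$.

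The main obstacle is the Case~II main inequality for a general nonsmooth $g$. Unlike Case~I, one cannot discard the subgradient difference $\mS_g(\mx\+)-\mS_g(\mx^*)$, so the primal estimate must be driven through the non-expansiveness of $\prox_{\alpha g}$ while still absorbing both the coupling $-\alpha\mA\T\tw^k$ and the $\theta$-momentum cross terms; one must also verify that the dual coefficient matrix $M$ stays at or below $\mI$, so that \emph{all} of the dual-block contraction is supplied by the $(1+\beta/l_h)^2$ gain coming from \cref{h_smooth} and none is borrowed from a (now unavailable) positive definiteness of $\mA\mA\T$. A secondary subtlety is pinning down the exact strong-convexity modulus of $\mh$, and hence the precise $1+\beta/l_h$ factor, in view of the $\frac{1}{n}$ scaling in the definition of $\mh$.
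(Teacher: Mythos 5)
Your proposal is correct and follows essentially the same route as the paper's own proof: the paper likewise establishes a Case-II main inequality with $\delta_1 = 1-\alpha\lt(\mu-2\theta\alpha l^2\rt)$, $\delta_3 = 1-\gamma\us^2(B)$, and a dual-block weight no larger than $\mI$, then derives $\lt(1+\frac{\beta}{l_h}\rt)\norm{\tw\+} \leq \norm{\tv\+}$ from the $\frac{1}{l_h}$-strong convexity of $h^*$ plus Cauchy--Schwarz, and combines the two exactly as you do, through $\norm{\tv\+}^2_{\mI-\gamma\mB^2} \geq \lt(1-\gamma\os^2(B)\rt)\lt(1+\frac{\beta}{l_h}\rt)^2\norm{\tw\+}^2$ and a Lyapunov rescaling by $\delta_2 = \frac{1}{\lt(1+\frac{\beta}{l_h}\rt)^2\lt(1-\gamma\os^2(B)\rt)}$, with the $\gamma$ bound serving precisely to make $\delta_2<1$. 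The one refinement your sketch needs is at the spot you yourself flag as the main obstacle: plain non-expansiveness of $\prox_{\alpha g}$ bounds $\norm{\tx\+}$ by the pre-prox point while the dual cross term involves the post-prox $\tx\+$, so the paper instead applies the subgradient monotonicity $\dotprod{\tx\+, \mS_g(\mx\+)-\mS_g(\mx^*)} \geq 0$ (i.e., firm non-expansiveness) directly to the recursion to keep every cross term in $\tx\+$; your secondary concern about the $\frac{1}{n}$ scaling in $\mh$ affecting the exact modulus is real, and the paper's proof glosses over it as well.
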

\begin{proof}
  See Appendix \ref{appendix_convergence_npga_h_smooth}.
\end{proof}

\begin{figure*}[t]
  \begin{center}
    \subfigure[]{
      \begin{minipage}[b]{0.4\textwidth}
        \includegraphics[scale=0.35]{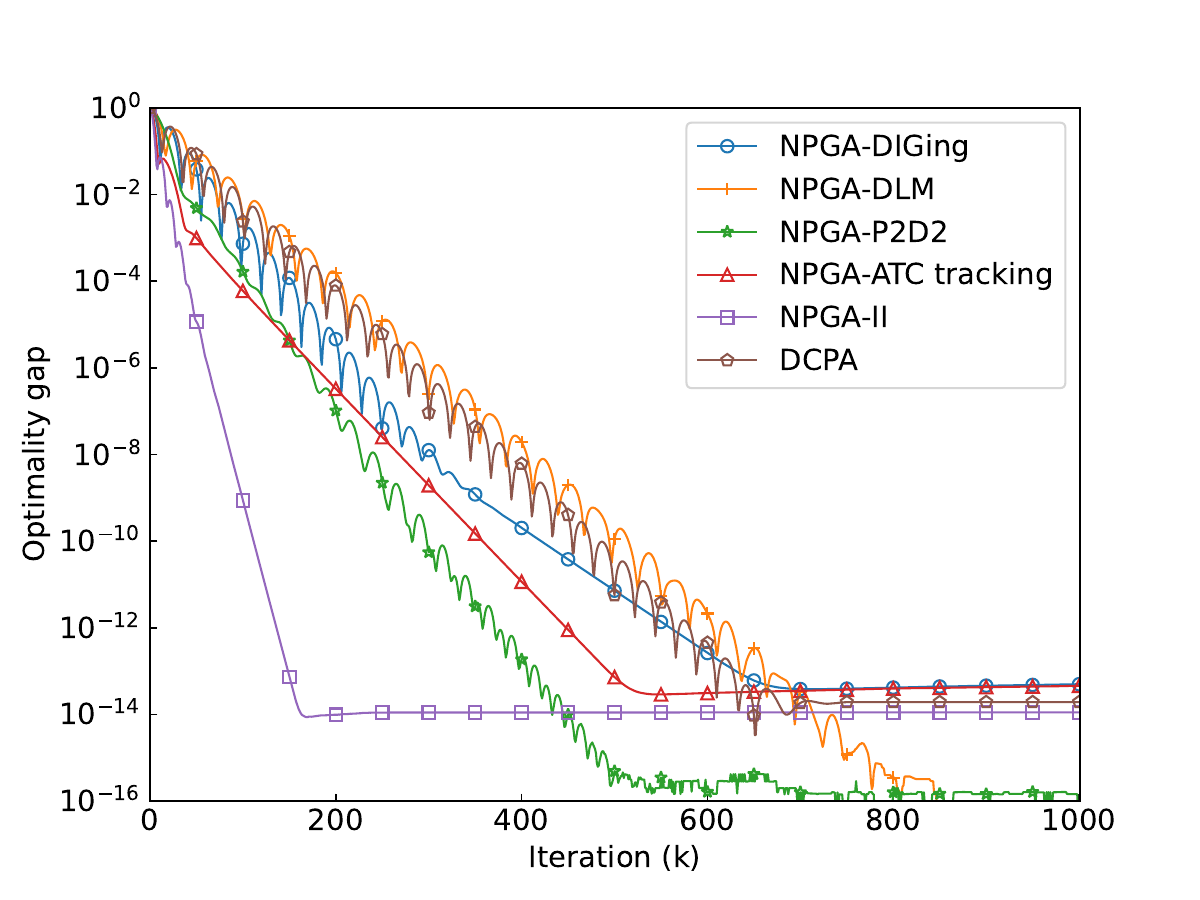}
      \end{minipage}
    }
    \subfigure[]{
      \begin{minipage}[b]{0.4\textwidth}
        \includegraphics[scale=0.35]{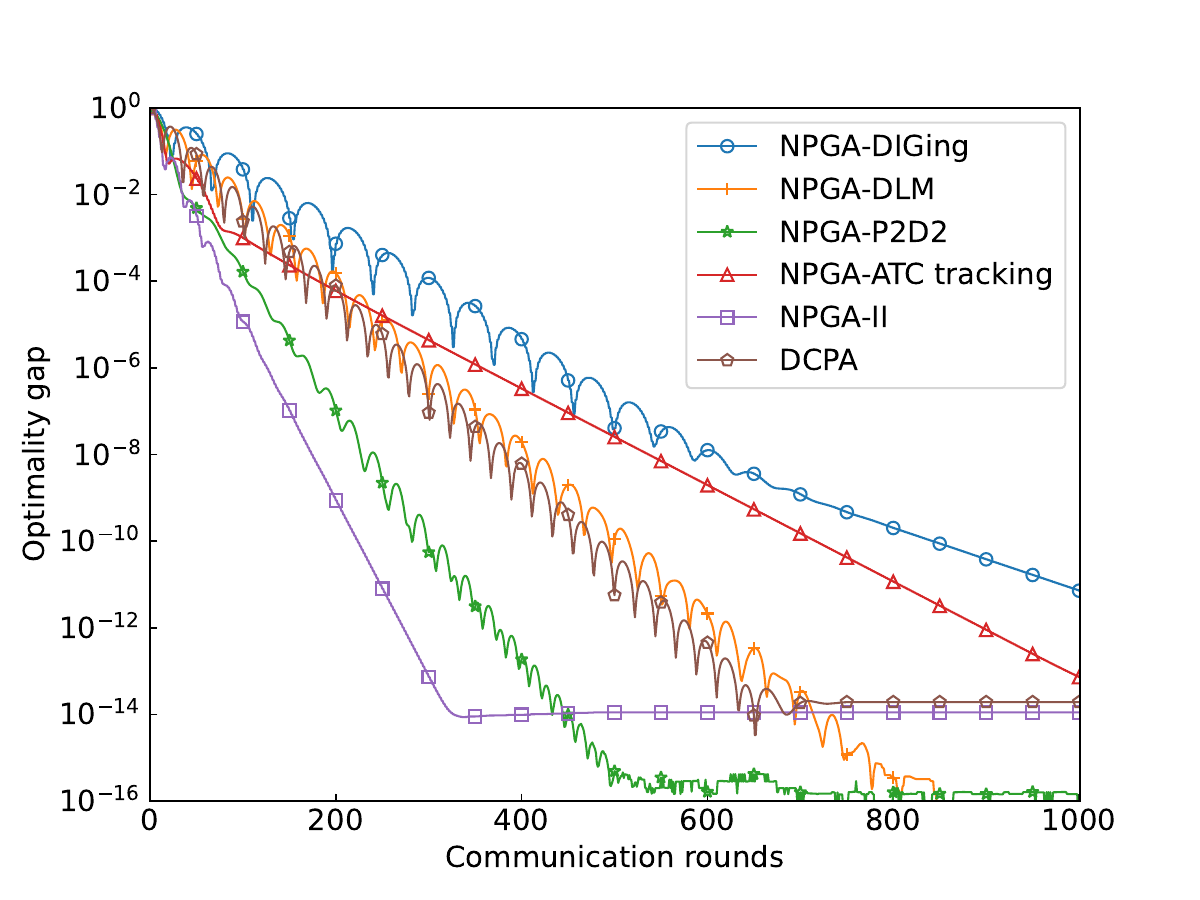}
      \end{minipage}
    }
    \caption {Result of Experiment I.}
    \label{fig1}
  \end{center}
\end{figure*}

\begin{figure*}[t]
  \begin{center}
    \subfigure[]{
      \begin{minipage}[b]{0.4\textwidth}
        \includegraphics[scale=0.35]{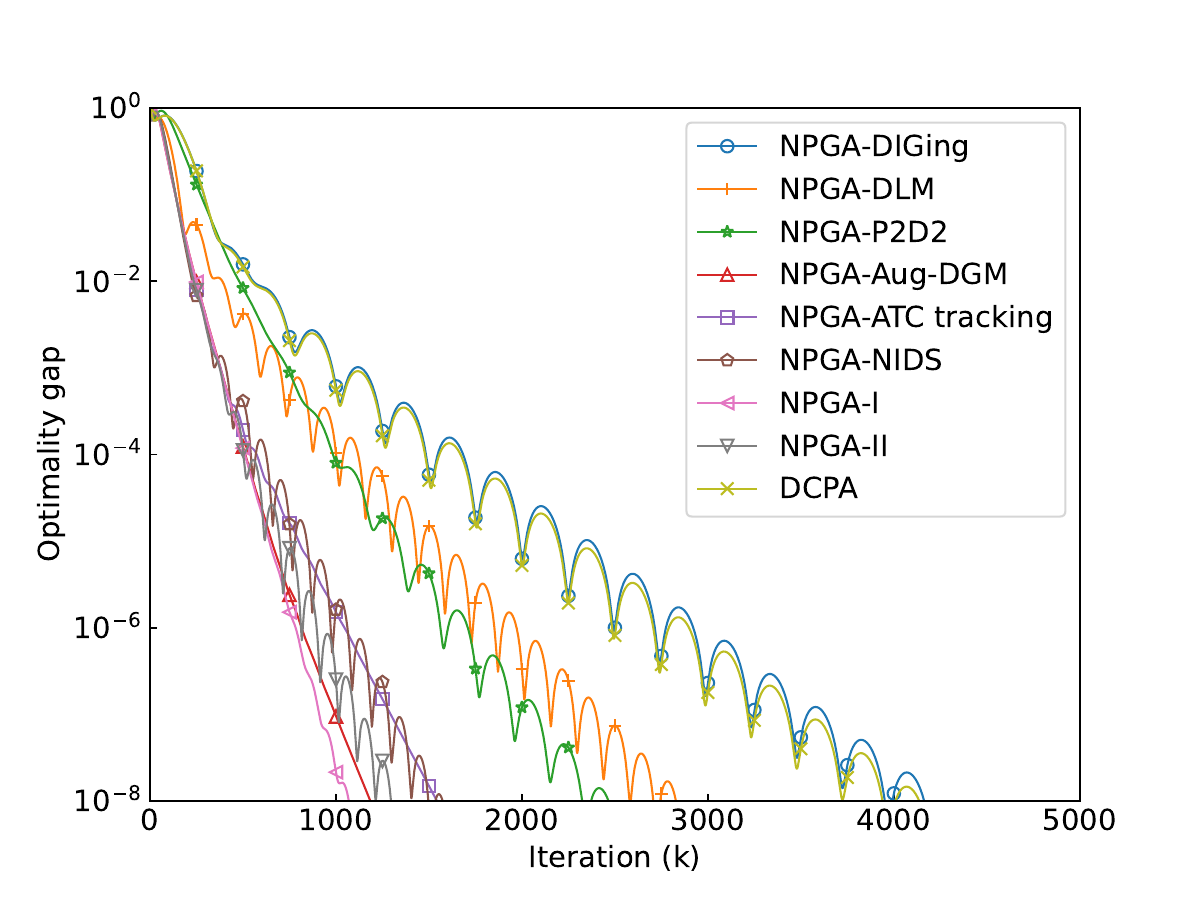}
      \end{minipage}
    }
    \subfigure[]{
      \begin{minipage}[b]{0.4\textwidth}
        \includegraphics[scale=0.35]{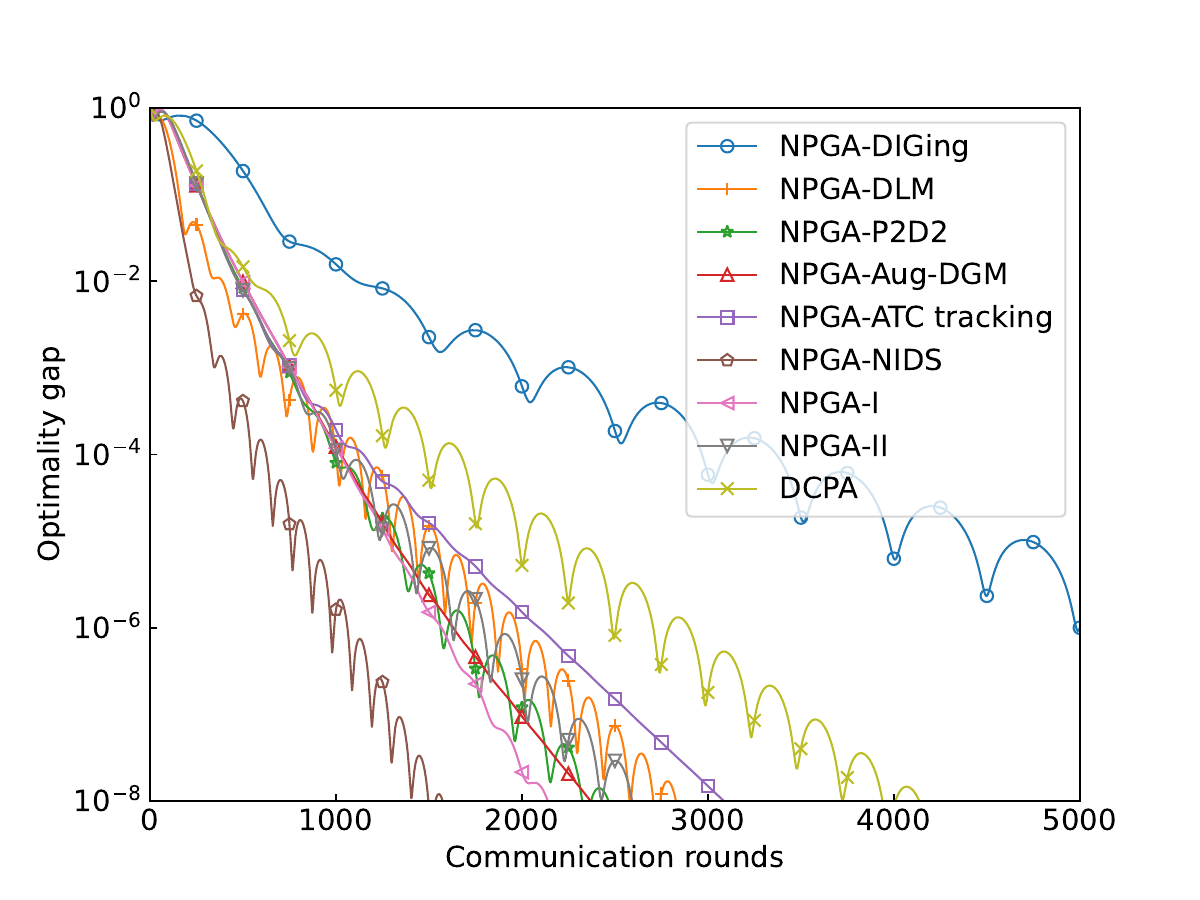}
      \end{minipage}
    }
    \caption {Result of Experiment II.}
    \label{fig2}
  \end{center}
\end{figure*}

\begin{figure*}[t]
  \begin{center}
    \subfigure[]{
      \begin{minipage}[b]{0.4\textwidth}
        \includegraphics[scale=0.35]{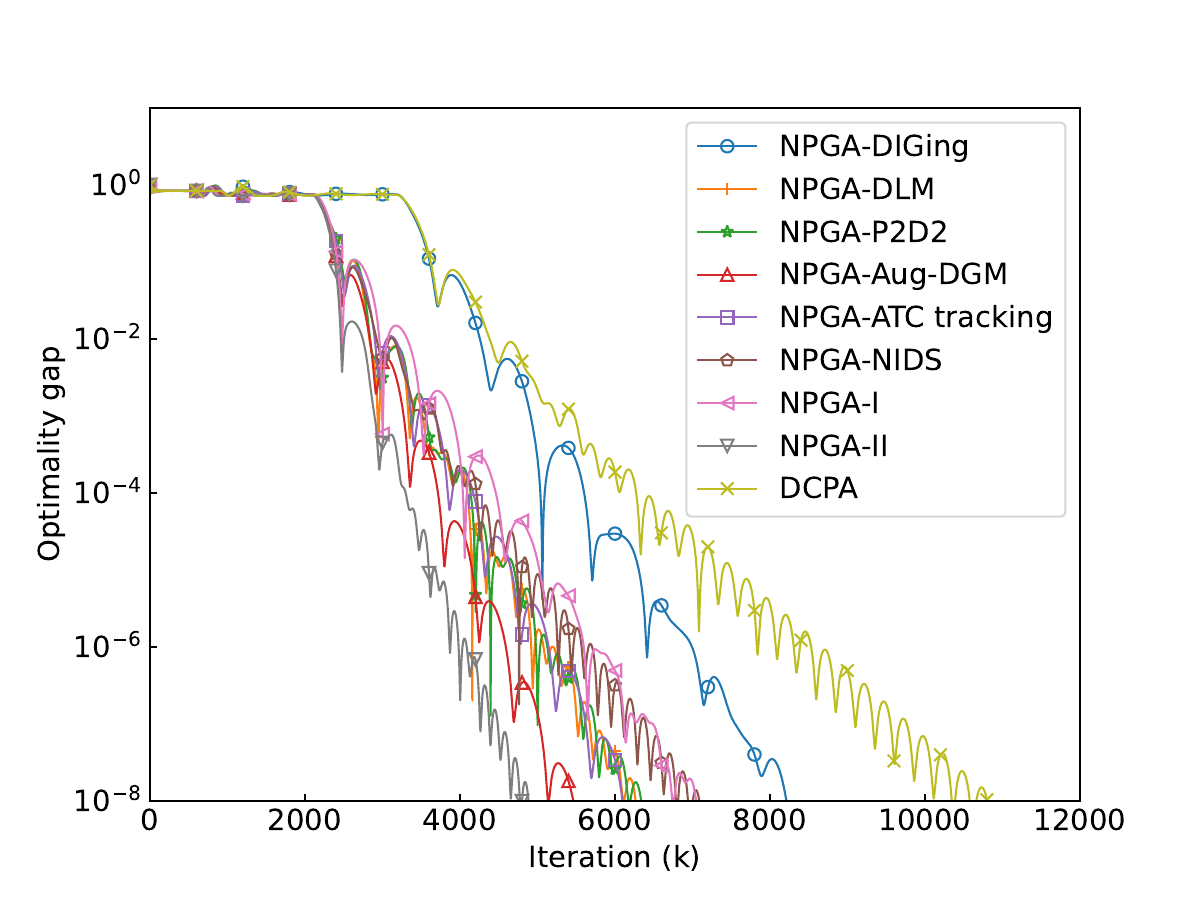}
      \end{minipage}
    }
    \subfigure[]{
      \begin{minipage}[b]{0.4\textwidth}
        \includegraphics[scale=0.35]{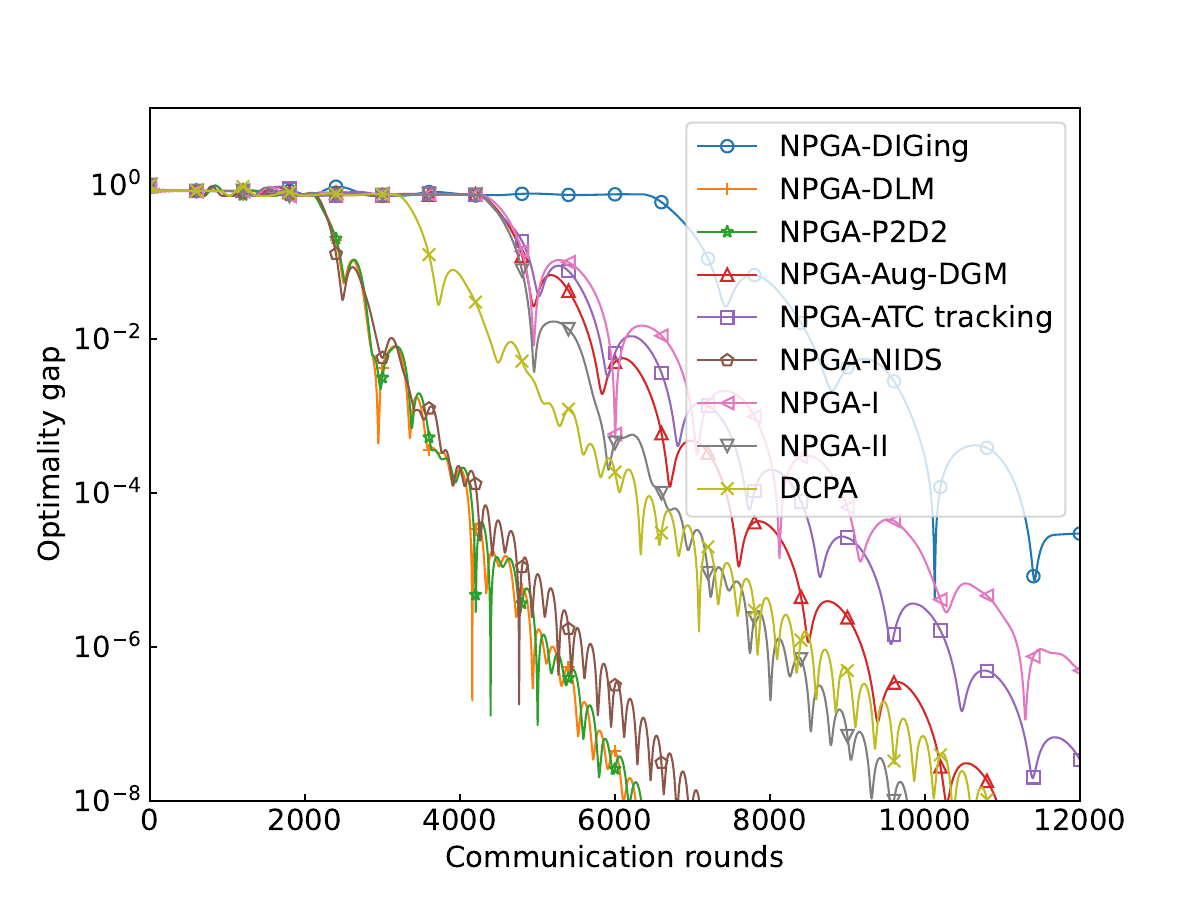}
      \end{minipage}
    }
    \caption {Result of Experiment III.}
    \label{fig3}
  \end{center}
\end{figure*}

\section{Numerical Experiments} \label{experiments}
\newcolumntype{Y}{>{\centering\arraybackslash}X}
\strutlongstacks{T}
In this section, we validate NPGA's convergence and evaluate the performance by solving vertical federated learning problems. Notice that NPGA-EXTRA and NPGA-Exact diffusion are the special cases of NPGA-P2D2 and NPGA-NIDS, respectively, hence we exclude them from the experiments. Though DCPA is also a special case of NPGA, we include it for the purpose of comparison.

Consider a scenario with $n$ parties,where each party has its own private data that cannot be accessed by others. The goal of federated learning systems is to train a learning model using all parties' data without compromising individual privacy. In vertical federated learning, the datasets among different parties consist of the same samples but with different features \cite{liu2022vertical}.

In particular, we focus on vertical federated learning for three types of regression problems: ridge regression, logistic regression, and elastic net regression, which correspond to the settings of \cref{convergence_npga_g0,convergence_npga_g0_DB}, \cref{convergence_npga_g0_indicator}, and\ \cref{convergence_npga_h_smooth}, respectively. For a given regression problem, let $X = [X_1, \cdots, X_n] \in \mR^{p \times d}$ and $Y \in \mR^{p}$ be the feature matrix and the label vector, respectively. Here, the local feature data $X_i \in \mR^{p \times d_i}$ belongs to the $i$-th party, and $\sum_{i=1}^n d_i = d$. Given this vertical federated learning setting, we can reformulate the original regression problem as \cref{pro2}, enabling us to solve it using NPGA.

\subsection{Experiment I: Ridge Regression}
One form of the ridge regression problem \cite{alghunaim2021dual} is formulated as
\eqe{ \label{ridge_regression}
  \min_{\theta \in \mR^d} \ \frac{1}{2}\norm{\theta}^2 + h(X\theta),
}
where $h$ is an indicator function given as
$$h(x)=\lt\{
  \begin{aligned}
    0,      \   & \norm{x - Y} \leq \delta, \\
    +\infty, \  & \norm{x - Y} > \delta.
  \end{aligned}\right.$$
We use $X\theta$ to denote the linear model since the last element of $\theta$ is the intercept. Therefore, $X$ is defined by $X = [X_{\text{raw}}, \1_n]$, where $X_{\text{raw}}$ is the raw feature matrix.
Considering of the vertical federated learning setting, \cref{ridge_regression} can be reformulated as
\eqe{ \label{decen_ridge_regression}
  \min_{\theta_i \in \mR^{d_i}} \ \frac{1}{2}\sum_{i=1}^n\norm{\theta_i}^2 + h\lt(\sum_{i=1}^nX_i\theta_i\rt),
}
which can be covered by \cref{pro2} with $f_i(x_i) = \frac{1}{2}\norm{x_i}^2$, $g_i=0$, and $A_i=X_i$. If $X$ has full row rank, \cref{decen_ridge_regression} will match the setting of \cref{convergence_npga_g0,convergence_npga_g0_DB}.

For this experiment, we use the Boston housing prices dataset \footnote{\href{http://lib.stat.cmu.edu/datasets/boston}{http://lib.stat.cmu.edu/datasets/boston}}, which have 13 features. We choose 10 samples to guarantee that $X \in \mR^{10 \times 14}$ has full row rank, and $X$ is divided into $[X_1, \cdots, X_{13}]$, where $X_i \in \mR^{10 \times 1}$ for $i \leq 12$, and $X_{13} \in \mR^{10 \times 2}$.

\subsection{Experiment II: Logistic Regression}
Given $X = [x_1, \cdots, x_p]\T \in \mR^{p \times d}$ and $Y = [y_1, \cdots, y_p]\T \in \{1,-1\}^p$, the logistic regression problem is defined as
\eqe{ \label{LR}
  \min_{\theta \in \mR^d} \frac{1}{p}\sum_{i=1}^p\ln\lt(1+\exp(-y_i\theta\T x_i)\rt) + \frac{\rho}{2}\norm{\theta}^2,
}
where $\rho>0$ is the regularization parameter. Considering of the vertical federated learning setting, let $X_i = [x_{i1}, \cdots, x_{ip}]\T$, \cref{LR} can be rewritten as
\eqe{ \label{LR2}
  \min_{\theta_i \in \mR^{d_i}} \frac{1}{p}\sum_{j=1}^p\ln\lt(1+\exp\lt(-y_j\sum_{i=1}^n\theta_i\T x_{ij}\rt)\rt) + \frac{\rho}{2}\sum_{i=1}^n\norm{\theta_i}^2.
}
By introducing a slack variable $z = [z_1, \cdots, z_p]\T = \sum_{i=1}^{n}X_i\theta_i$, we can reformulate \cref{LR2} as
\eqe{ \label{LR3}
  \min_{\theta_i \in \mR^{d_i}, z \in \mR^p} \ &\frac{1}{p}\sum_{j=1}^p\ln\lt(1+\exp\lt(-y_jz_j\rt)\rt) + \frac{\rho}{2}\sum_{i=1}^n\norm{\theta_i}^2 \\
  \text{s.t.} \ &\sum_{i=1}^nX_i\theta_i - z = 0,
}
which can be covered by \cref{pro2} with
\eqe{
&f_i(x_i) = \frac{\rho}{2}\|x_i\|^2, \ i = 1, \cdots, n, \\
&f_{n+1}(x_{n+1}) = \frac{1}{p}\sum_{j=1}^p\ln\lt(1+\exp\lt(-y_jx_{n+1,j}\rt)\rt), \\
&g_i=0, \ i = 1, \cdots, n+1, \\
&A_i = X_i, \ i = 1, \cdots, n, \ A_{n+1} = -\mI_p, \\
&h(x)=\lt\{
\begin{aligned}
  0,      \   & x = 0,    \\
  +\infty, \  & x \neq 0.
\end{aligned}\right.
\nonumber
}
A fact of \cref{LR3} is that $A = [X_1, \cdots, X_n, -\mI_p]$ always has full row rank, which enables it to match the setting of \cref{convergence_npga_g0_indicator} naturally.

For this experiment, we use the Covtype dataset \footnote{\href{https://www.csie.ntu.edu.tw/~cjlin/libsvmtools/datasets/binary.html}{https://www.csie.ntu.edu.tw/\~{}cjlin/libsvmtools/datasets/binary.html}}, which has 54 features. We choose 100 samples to constuct $X$ and divide it into $[X_1, \cdots, X_{27}]$, where $X_i \in \mR^{100 \times 2}$ for $i \leq 26$, and $X_{27} \in \mR^{100 \times 3}$.

\subsection{Experiment III: Elastic Net Regression}
The elastic net regression problem is formulated as
\eqe{ \label{elastic_net}
  \min_{\theta \in \mR^d} \ \frac{1}{2p}\norm{X\theta-Y}^2 + \alpha\rho\norm{\theta}_1 + \frac{\alpha(1-\rho)}{2}\norm{\theta}^2,
}
which is equivalent to
\eqe{ \label{elastic_net2}
  \min_{\theta \in \mR^d} \ h(X\theta) + \alpha\rho\norm{\theta}_1 + \frac{\alpha(1-\rho)}{2}\norm{\theta}^2,
}
where $h(x) = \frac{1}{2p}\norm{x-Y}^2$.
Considering of the vertical federated learning setting, \cref{elastic_net2} can be reformulated as
\eqe{ \label{decen_elastic_net}
  \min_{\theta_i \in \mR^{d_i}} \ \frac{\alpha(1-\rho)}{2}\sum_{i=1}^n\norm{\theta_i}^2 + \alpha\rho\sum_{i=1}^n\norm{\theta_i}_1 + h\lt(\sum_{i=1}^nX_i\theta_i\rt),
}
which can be covered by \cref{pro2} with $f_i(x_i) = \frac{\alpha(1-\rho)}{2}\norm{x_i}^2$, $g_i(x_i)=\alpha\rho\norm{x_i}_1$, and $A_i=X_i$. Note that $h$ is $\frac{1}{p}$-smooth, hence \cref{decen_elastic_net} matches the setting of \cref{convergence_npga_h_smooth}.

For this experiment, we still use the Boston housing prices dataset and choose 10 samples to construct $X$. The division mode of $X$ is also the same with Experiment I. The only difference is that we do not need to guarantee that $X$ has full row rank.

According to the problem formulations of the above three experiments, the numbers of parties are $13$, $28$, and $13$, respectively. We utilize the Erdos-Renyi model \cite{erdos1960evolution} with a connectivity probability of $0.3$ to generate the network topology among different parties, and the mixing matrix $W$ is constructed by the Laplacian method. When applying an algorithm to solve the problem, we try different combinations of step-sizes to obtain the fastest convergence rate.

The experimental results are shown in \cref{fig1,fig2,fig3}, where the optimality gap is defined as $\frac{\norm{\mx^k-\mx^*}}{\|\mx^0-\mx^*\|}$. Note that in various versions of NPGA, including DCPA, the computations of the gradient and the proximal operator represent the primary computational burden at each iteration. Furthermore, these computations are performed only once per iteration for both NPGA and DCPA. Consequently, the number of iterations can serve as a metric for comparing the computational costs between NPGA and DCPA. According to the experiment results, we can see that NPGA can achieve linear convergence for all problem settings, which support our theoretical results. Besides, certain versions of NPGA exhibit significantly faster convergence rates compared to DCPA in terms of both the numbers of iterations and communication rounds, which suggests that they outperforms DCPA in terms of both computational and communication efficiencies.

\section{Conclusion} \label{conclusion}
In this work, we study a class of decentralized constraint-coupled optimization problems and propose a novel nested primal-dual gradient algorithm called NPGA. NPGA not only serves as an algorithm but also offers an algorithmic framework, encompassing various
existing algorithms as special cases. We prove that NPGA can achieve linear convergence under the weakest known condition, and the theoretical convergence rate surpasses all known results. By designing different network matrices, we can derive various versions of NPGA and anlyze their convergences conveniently, providing an opportunity to design more efficient algorithms. Numerical experiments also confirm that NPGA exhibits a faster convergence rate compared to existing algorithms. Nevertheless, the linear convergence of NPGA is established under the assumption of a time-invariant network topology, which may not hold true in certain scenarios. Therefore, exploring the extension of NPGA's linear convergence to time-varying graphs will be a meaningful direction of future research.

\bibliographystyle{ieeetr}
\bibliography{D:/Research/Works/public/bib}

\begin{IEEEbiography}
  [{\includegraphics[width=1in,height=1.25in,clip,keepaspectratio]{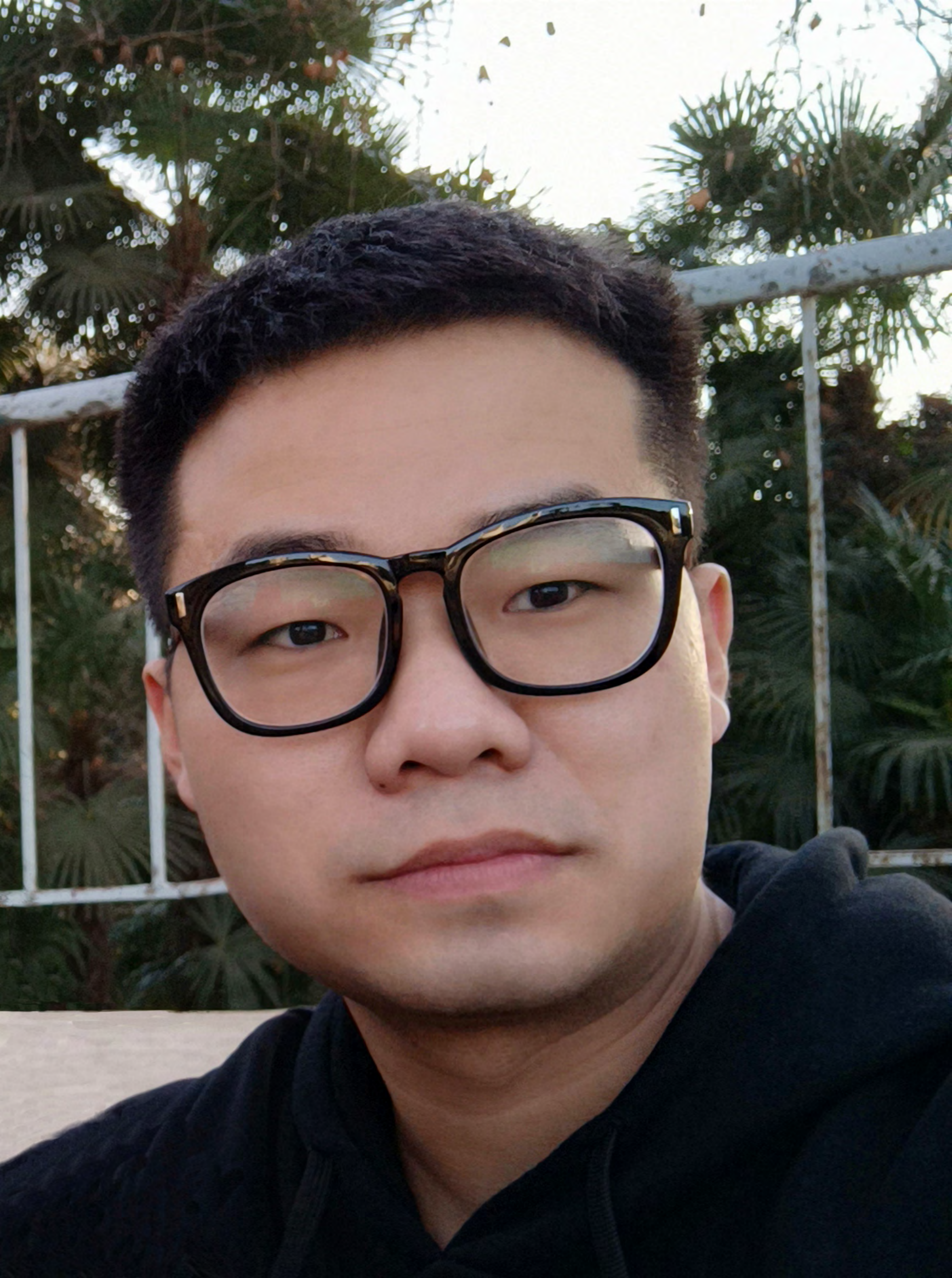}}]
  {Jingwang Li}
  received the B.S. degree in engineering management from Huazhong Agricultural University, Wuhan, China, in 2019 and the M.S. degree in control science and engineering from Huazhong University of Science and Technology, Wuhan, China, in 2022. His research interests include decentralized optimization and learning.
\end{IEEEbiography}

\begin{IEEEbiography}
  [{\includegraphics[width=1in,height=1.25in,clip,keepaspectratio]{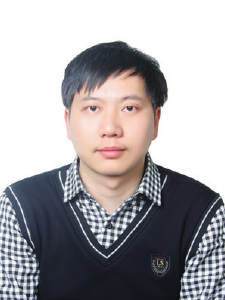}}]
  {Housheng Su}
  received his B.S. degree in automatic control and his M.S. degree in control theory and control engineering from Wuhan University of Technology, Wuhan, China, in 2002 and 2005, respectively, and his Ph.D. degree in control theory and control engineering from Shanghai Jiao Tong University, Shanghai, China, in 2008. From December 2008 to January 2010, he was a Postdoctoral researcher with the Department of Electronic Engineering, City University of Hong Kong, Hong Kong. Since November 2014, he has been a full professor with the School of Artificial Intelligence and Automation, Huazhong University of Science and Technology, Wuhan, China. He is an Associate Editor of IET Control Theory and Applications. His research interests lie in the areas of multi-agent coordination control theory and its applications to autonomous robotics and mobile sensor networks.
\end{IEEEbiography}

\appendices \label{appendix}
\section{} \label{appendix_min_max}
\begin{appendix_proof}[\cref{min_max}]
  Since $H$ and $HM\T MH$ are both symmetric, according to Courant-Fischer min-max theorem \cite{horn2012matrix}, we have
  \eqe{
    \lambda_i(H^2) =& \min_{\{S:\text{dim}(S)=i\}}\max_{\{x \in S, \|x\|=1\}}\|Hx\|^2, \\
    \lambda_i(HM\T MH) =& \min_{\{S:\text{dim}(S)=i\}}\max_{\{x \in S, \|x\|=1\}}\|MHx\|^2,
  }
  where $S$ is a subspace of $\mR^n$ and $\text{dim}(S)$ is the dimension of $S$. It follows that
  \eqe{
  \sigma_i(H) = \lambda_i(H^2)^{\frac{1}{2}} = \min_{\{S:\text{dim}(S)=i\}}\max_{\{x \in S, \|x\|=1\}}\|Hx\|
  }
  and
  \eqe{
  \sigma_i(MH) =& \lambda_i(HM\T MH)^{\frac{1}{2}} \\
  =& \min_{\{S:\text{dim}(S)=i\}}\max_{\{x \in S, \|x\|=1\}}\|MHx\| \\
  \leq& \os(M)\min_{\{S:\text{dim}(S)=i\}}\max_{\{x \in S, \|x\|=1\}}\|Hx\| \\
  =&\os(M)\sigma_i(H),
  }
  which completes the proof.
\end{appendix_proof}

\section{} \label{appendix_fixed_point}
\begin{appendix_proof}[\cref{fixed_point}]
  If $(\mx^*, \mv^*, \my^*, \ml^*)$ is a fixed point of NPGA, it holds that
  \begin{subequations} \label{npga_fixed_point}
    \begin{align}
      0     & \in \nabla f(\mx^*) + \mA\T\ml^* + \partial g(\mx^*), \label{npga_fixed_point_a} \\
      \mv^* & = \ml^* - \mC\ml^* - \mB\my^* + \beta\mA\mx^*, \label{npga_fixed_point_b}        \\
      0     & = \mB\mv^*, \label{npga_fixed_point_c}                                           \\
      \ml^* & = \prox_{\beta\mh}\lt(\mD\mv^*\rt). \label{npga_fixed_point_d}
    \end{align}
  \end{subequations}

  \textbf{Necessity}: According to \cref{condition_main_inequality,npga_fixed_point_c}, we know that $\mv^* = \1_n \otimes v^*$, where $v^* \in \mR^p$. Note that $D$ is a doubly stochastic matrix, thus $\ml^* = \mD\mv^* = \mv^*$. Combining it with \cref{npga_fixed_point_d}, we can further obtain that $\ml^* = \1_n \otimes \lambda^*$, where $\lambda^* \in \mR^p$. Then we can obtain \cref{opt_condition_a} from \cref{npga_fixed_point_a}. We also have
  \eqe{
    \lambda^* = \prox_{\frac{\beta}{n}h^*}(v^*),
    \nonumber
  }
  which is equivalent to
  \eqe{ \label{0029}
    \frac{n}{\beta}(v^* - \lambda^*) \in \partial h^*(\lambda^*).
  }
  Left multiplying \cref{npga_fixed_point_b} with $\1_n\T \otimes \mI_p$ gives that
  \eqe{
    nv^* &= n\lambda^* + \beta A\mx^*,
    \nonumber
  }
  applying it to \cref{0029} gives \cref{opt_condition_b}.

  \textbf{Sufficiency}: Let $\ml^* = \1_n \otimes \lambda^*$, then \cref{npga_fixed_point_a} holds. Let
  \eqe{ \label{0046}
    v^* = \lambda^* + \frac{\beta}{n} A\mx^*
  }
  and $\mv^* = \1_n \otimes v^*$, clearly \cref{npga_fixed_point_c} holds. Combining \cref{0046,opt_condition_b}, we can also obtain \cref{npga_fixed_point_d}. Note that $\mC\ml^* = 0$ and
  \eqe{
    \lt(\1_n\T \otimes \mI_p\rt)\lt(\ml^*-\mv^* + \beta\mA\mx^*\rt) = n(\lambda^*-v^*) + \beta A\mx^* = 0,
    \nonumber
  }
  which implies that
  $$\ml^*-\mv^* + \beta\mA\mx^* \in \mathbf{Null}\lt(\1_n\T \otimes \mI_p\rt) = \col(\mB).$$
  As a result, there must exist $\my^* \in \mR^{np}$ such that \cref{npga_fixed_point_b} holds, which completes the proof.
\end{appendix_proof}

\section{} \label{appendix_mA}
\begin{appendix_proof}[\cref{mA}]
  Obviously $\mathbf{M}\mA\mA\T\mathbf{M}+c\mathbf{H}$ is positive semi-definite. Assume that $\mx = [x_1\T, \cdots, x_n\T]\T \in \mR^{np}$ satisfies $\mx\T\mathbf{M}\mA\mA\T\mathbf{M}+c\mathbf{H}\mx = 0$, then we have $\mx\T\mathbf{M}\mA\mA\T\mathbf{M}\mx = 0$ and $\mx\T\mathbf{H}\mx = 0$. The latter implies that $x = x_1 = \cdots = x_n$, it follows that
  \eqe{
    0 =& \mx\T\mathbf{M}\mA\mA\T\mathbf{M}\mx \\
    =& \mx\T\mA\mA\T\mx \\
    =& x\T AA\T x,
  }
  where the first equality holds since $M$ is double stochastic. Note that \cref{A} implies that $AA\T > 0$, hence $x = 0$, which completes the proof.
\end{appendix_proof}

\section{} \label{appendix_main_inequality}
\begin{appendix_proof}[\cref{main_inequality}]
  According to \cref{npga_error_g0}, we have
  \eqe{ \label{32}
  &\norm{\tv\+}^2 \\
  =& \norm{\tw^k-\mC\tw^k+\beta \mA\lt(\tx\+ + \theta(\tx\+-\tx^k)\rt)}^2 \\
  &- 2\dotprod{\mB\ty^k, \tw^k-\mC\tw^k+\beta \mA\lt(\tx\+ + \theta(\tx\+-\tx^k)\rt)} \\
  &+ \norm{\ty^k}^2_{\mB^2}, \\
  &\norm{\ty\+}^2 \\
  =& \norm{\ty^k}^2 + \gamma^2\norm{\tv\+}^2_{\mB^2} + 2\gamma\dotprod{\mB\ty^k, \tv\+} \\
  =& \norm{\ty^k}^2 + \gamma^2\norm{\tv\+}^2_{\mB^2} - 2\gamma\norm{\ty^k}^2_{\mB^2} \\
  &+ 2\gamma\dotprod{\mB\ty^k, \tw^k-\mC\tw^k+\beta \mA\lt(\tx\+ + \theta(\tx\+-\tx^k)\rt)}.
  }
  Note that
  \eqe{
    &\dotprod{\tw^k-\mC\tw^k+\beta \mA\tx\+, \mA\lt(\tx\+-\tx^k\rt)} \\
    =& \dotprod{\tw^k-\mC\tw^k, \mA\lt(\tx\+-\tx^k\rt)} + \beta\dotprod{\mA\tx\+, \mA\lt(\tx\+-\tx^k\rt)} \\
    =& \dotprod{\tw^k-\mC\tw^k, \mA\lt(\tx\+-\tx^k\rt)} \\
    &+ \frac{\beta}{2}\lt(\norm{\mA\tx\+}^2 + \norm{\mA\lt(\tx\+-\tx^k\rt)}^2 - \norm{\mA\tx^2}\rt),
  }
  then we have
  \eqe{ \label{1132}
    &\norm{\tw^k-\mC\tw^k+\beta \mA\lt(\tx\+ + \theta(\tx\+-\tx^k)\rt)}^2 \\
    =& \norm{\tw^k-\mC\tw^k+\beta \mA\tx\+}^2 + \theta^2\beta^2\norm{\mA\lt(\tx\+-\tx^k\rt)}^2 \\
    &+ 2\theta\beta\dotprod{\tw^k-\mC\tw^k+\beta \mA\tx\+, \mA\lt(\tx\+-\tx^k\rt)} \\
    =& \norm{\tw^k-\mC\tw^k+\beta \mA\tx\+}^2 + (1+\theta)\theta\beta^2\norm{\mA\lt(\tx\+-\tx^k\rt)}^2 \\
    &+ \theta\beta^2\norm{\mA\tx\+}^2 - \theta\beta^2\norm{\mA\tx}^2 \\
    &+ 2\theta\beta\dotprod{\tw^k-\mC\tw^k, \mA\lt(\tx\+-\tx^k\rt)}.
  }
  In light of \cref{condition_main_inequality} and $\gamma < 1$, we know that $\mI-\gamma \mB^2 > 0$. Combining \cref{32,1132} gives that
  \eqe{ \label{1428}
  &\norm{\tv\+}^2_{\mI-\gamma \mB^2} + \frac{1}{\gamma}\norm{\ty\+}^2 \\
  =& \norm{\tw^k-\mC\tw^k+\beta \mA\tx\+}^2 + \frac{1}{\gamma}\norm{\ty^k}^2_{\mI-\gamma \mB^2} \\
  &+ (1+\theta)\theta\beta^2\norm{\mA\lt(\tx\+-\tx^k\rt)}^2  + \theta\beta^2\norm{\mA\tx\+}^2 \\
  &- \theta\beta^2\norm{\mA\tx}^2 + 2\theta\beta\dotprod{\tw^k-\mC\tw^k, \mA\lt(\tx\+-\tx^k\rt)}.
  }
  Note that
  \eqe{ \label{0130}
    &2\dotprod{\alpha\mA\T\lt(\tw^k-\mC\tw^k\rt), \tx\+-\tx^k} \\
    =& \norm{\alpha\mA\T\lt(\tw^k-\mC\tw^k\rt) + \tx\+-\tx^k}^2 \\
    &- \alpha^2\norm{\mA\T\lt(\tw^k-\mC\tw^k\rt)}^2 - \norm{\tx\+-\tx^k}^2 \\
    \leq& \alpha^2\norm{\mA\T\mC\tw^k + \lt(\nabla f(\mx^k)-\nabla f(\mx^*)\rt)}^2 - \norm{\tx\+-\tx^k}^2 \\
    \leq& 2\alpha^2\norm{\mA\T\mC\tw^k}^2 + 2\alpha^2\norm{\nabla f(\mx^k)-\nabla f(\mx^*)}^2 \\
    &- \norm{\tx\+-\tx^k}^2,
  }
  where the first inequality holds because of \cref{npga_error_g0}.
  Also note that
  \cref{condition_main_inequality} implies that $0 < \os(C) < 1$, applying Jensen's inequality gives that
  \eqe{ \label{jen1}
  \norm{\beta \mA\tx\+-\mC\tw^k}^2 &\leq \frac{\beta^2}{1-\os(C)}\norm{\mA\tx\+}^2 + \frac{1}{\os(C)}\norm{\tw^k}^2_{\mC^2} \\
  &\leq \frac{\beta^2\os^2(\mA)}{1-\os(C)}\norm{\tx\+}^2 + \norm{\tw^k}^2_{\mC},
  }
  it follows that
  \eqe{ \label{17}
  &\norm{\tw^k-\mC\tw^k+\beta \mA\tx\+}^2 \\
  =& \norm{\tw^k}^2 + \norm{\beta \mA\tx\+-\mC\tw^k}^2 - 2\norm{\tw^k}^2_\mC \\
  &+ 2\beta\dotprod{\mA\T\tw^k, \tx\+} \\
  =& \frac{\beta^2\os^2(\mA)}{1-\os(C)}\norm{\tx\+}^2 + \norm{\tw^k}^2_{\mI-\mC} - 2\alpha\beta\norm{\mA\T\tw^k}^2 \\
  &+ 2\beta\dotprod{\mA\T\tw^k, \tx^k-\alpha(\nabla f(\mx^k)-\nabla f(\mx^*))}.
  }
  Applying \cref{0130,17} to \cref{1428} gives that
  \eqe{ \label{38}
  &\norm{\tv\+}^2_{\mI-\gamma \mB^2} + \frac{1}{\gamma}\norm{\ty\+}^2 \\
  \leq& \beta^2\os^2(\mA)\lt(\frac{1}{1-\os(C)}+\theta\rt)\norm{\tx\+}^2 + \norm{\tw^k}^2_{\mI-\mC} \\
  & - 2\alpha\beta\norm{\mA\T\tw^k}^2 + \frac{1}{\gamma}\norm{\ty^k}^2_{\mI-\gamma \mB^2} \\
  &+ 2\beta\dotprod{\mA\T\tw^k, \tx^k-\alpha(\nabla f(\mx^k)-\nabla f(\mx^*))} \\
  &+ \theta(1+\theta)\beta^2\|\mA(\tx\+-\tx^k)\|^2 - \frac{\theta\beta}{\alpha}\norm{\tx\+-\tx^k}^2 \\
  &+ 2\theta\alpha\beta\norm{\mA\T\mC\tw^k}^2 + 2\theta\alpha\beta\norm{\nabla f(\mx^k)-\nabla f(\mx^*)}^2.
  }

  According to \cref{npga_error_g0}, we can also obtain that
  \eqe{
    \norm{\tx\+}^2 =& \norm{\tx^k-\alpha\lt(\nabla f(\mx^k)-\nabla f(\mx^*)\rt)}^2 + \alpha^2\norm{\mA\T\tw^k}^2 \\
    &- 2\alpha\dotprod{\mA\T\tw^k, \tx^k-\alpha\lt(\nabla f(\mx^k)-\nabla f(\mx^*)\rt)},
  }
  then we have
  \eqe{ \label{39}
  &\norm{\tx\+}^2 + c_2\lt(\norm{\tv\+}^2_{\mI-\gamma \mB^2} + \frac{1}{\gamma}\norm{\ty\+}^2\rt) \\
  \leq& \alpha\beta\os^2(\mA)\lt(\frac{1}{1-\os(C)}+\theta\rt)\norm{\tx\+}^2 \\
  &+ \norm{\tx^k-\alpha(\nabla f(\mx^k)-\nabla f(\mx^*))}^2 \\
  &+ c_2\norm{\tw^k}^2_{\mI-\mC} - \alpha^2\norm{\mA\T\tw^k}^2 + c_3\norm{\ty^k}^2_{\mI-\gamma \mB^2} \\
  &+ \theta(1+\theta)\alpha\beta\norm{\mA(\tx\+-\tx^k)}^2- \theta\norm{\tx\+-\tx^k}^2 \\
  & + 2\theta\alpha^2\norm{\nabla f(\mx^k)-\nabla f(\mx^*)}^2 + 2\theta\alpha^2\norm{\mA\T\mC\tw^k}^2.
  }
  According to \cref{step_npga_g0} and $\mu \leq l$, we have
  \eqe{ \label{21}
    \alpha\beta <& \frac{1}{(1+2\theta)\os^2(\mA)\lt(\frac{1}{1-\os(C)}+\theta\rt)},
  }
  thus $c_1>0$ and $\mI-\mC-\alpha\beta \mA\mA\T > 0$, it follows that
  \eqe{ \label{139}
  &c_1\norm{\tx\+}^2 + c_2\norm{\tv\+}^2_{\mI-\gamma \mB^2} + c_3\norm{\ty\+}^2 \\
  \leq& \norm{\tx^k-\alpha(\nabla f(\mx^k)-\nabla f(\mx^*))}^2 \\
  &+ c_2\norm{\tw^k}^2_{\mI-\mC-\alpha\beta \mA\mA\T} + c_3\norm{\ty^k}^2_{\mI-\gamma \mB^2} \\
  &+ \theta(1+\theta)\alpha\beta\norm{\mA(\tx\+-\tx^k)}^2- \theta\norm{\tx\+-\tx^k}^2 \\
  & + 2\theta\alpha^2\norm{\nabla f(\mx^k)-\nabla f(\mx^*)}^2 + 2\theta\alpha^2\norm{\mA\T\mC\tw^k}^2.
  }

  Recall \cref{convex} and $\alpha l(1+2\theta) < 1$, applying \cref{convex_inequality} gives that
  \eqe{ \label{strconvex}
    &\norm{|\tx^k-\alpha(\nabla f(\mx^k)-\nabla f(\mx^*))}^2 \\
    &+ 2\theta\alpha^2\norm{\nabla f(\mx^k)-\nabla f(\mx^*)}^2 \\
    =& \norm{\tx^k}^2 - 2\alpha \dotprod{\tx^k, \nabla f(\mx^k)-\nabla f(\mx^*)} \\
    &+ (1+2\theta)\alpha^2\norm{\nabla f(\mx^k)-\nabla f(\mx^*)}^2 \\
    \leq& \norm{\tx^k}^2 - \alpha(2-\alpha l(1+2\theta)) \dotprod{\tx^k, \nabla f(\mx^k)-\nabla f(\mx^*)} \\
    \leq& (1-\alpha\mu(2-\alpha l(1+2\theta)))\norm{\tx^k}^2.
    % \nonumber
  }
  Let $\delta_1 = 1-\alpha\mu(1-\alpha l(1+2\theta))$, according to \cref{step_npga_g0}, we can easily verify that $\delta_1 \in (0, 1)$ and $\mu \geq \beta\os^2(\mA)\lt(\frac{1}{1-\os(C)}+\theta\rt)$, then we have
  \eqe{ \label{x_bound}
    &(1-\alpha\mu(2-\alpha l(1+2\theta)))\norm{\tx^k}^2 \\
    =& \delta_1\norm{\tx^k}^2 - \alpha\mu\|\tx^k\|^2 \\
    =& \delta_1c_1\norm{\tx^k}^2 \\
    &- \alpha\lt(\mu - \delta_1\lt(\beta\os^2(\mA)\lt(\frac{1}{1-\os(C)}+\theta\rt)\rt)\rt)\norm{x^k}^2 \\
    \leq& \delta_1c_1\norm{\tx^k}^2.
  }
  Note that $\my^0=0$ implies that $\my^k\in \col(\mB)$, then $\ty^k\in \col(\mB)$, it follows that
  \eqe{ \label{y_bound}
  \norm{\ty^k}^2_{\mI-\gamma \mB^2} \leq (1-\gamma\us^2(B))\norm{\ty^k}^2.
  }
  Let $\delta_3 = 1-\gamma\us^2(B)$, obviously $\delta_3 \in (0, 1)$.
  An obvious result of \cref{21} is
  \eqe{
    \alpha\beta <& \frac{1}{\os^2(\mA)\lt(1+\theta\rt)},
  }
  then we have
  \eqe{ \label{2351}
    &\theta(1+\theta)\alpha\beta\|\mA(\tx\+-\tx^k)\|^2 - \theta\norm{\tx\+-\tx^k}^2 \\
    \leq& \theta ((1+\theta)\alpha\beta\os^2(\mA) - 1)\norm{\tx\+-\tx^k}^2 \leq 0.
  }
  Also note that
  \eqe{ \label{2352}
  \norm{\mA\T\mC\tw^k}^2 \leq \os(\mA)^2\norm{\tw^k}^2_{\mC^2} \leq \os(C)\os(\mA)^2\norm{\tw^k}^2_{\mC},
  }
  then the proof is completeed by applying \cref{x_bound,y_bound,2351,2352} to \cref{139}.
\end{appendix_proof}

\section{} \label{appendix_main_inequality_C}
\begin{appendix_proof}[\cref{main_inequality_C}]
  Given \cref{A}, $\Null(C) = \Span(\1_n)$, and $\gamma<1$, we can verify that $E> 0$ using \cref{mA}, then we have
  \eqe{ \label{l_bound}
  \|\tw^k\|^2_{\mI-\alpha\beta E} \leq (1 - \alpha\beta\ue(E))\|\tw^k\|^2.
  }
  Let $\delta_2 = 1 - \alpha\beta\ue(E)$, in light of Weyl's inequality \cite{horn2012matrix} and the positive definiteness of $E$, we have
  $$\ue(E)=\eta_1(E)\leq \eta_1\lt(\frac{1}{2\alpha\beta}\mC\rt)+\ove(\mA\mA\T)=\os^2(\mA),$$
  then we can verify that $\delta_2 \in (0, 1)$ by combining it with \cref{21}.

  Note that
  \eqe{
    2\theta\alpha\beta\os(C)\os^2(\mA) < \frac{2\theta\os(C)}{(1+2\theta)\lt(\frac{1}{1-\os(C)}+\theta\rt)} \leq \frac{2\theta}{(1+2\theta)\lt(1+\theta\rt)},
    \nonumber
  }
  define $\Phi(\theta) = \frac{2\theta}{(1+2\theta)\lt(1+\theta\rt)}$, by studying its derivative, we can obtain that
  \eqe{ \label{0023}
    \Phi(\theta) \leq \Phi\lt(\frac{1}{\sqrt{2}}\rt) < \frac{1}{2}, \ \forall \theta \geq 0.
  }
  Applying \cref{l_bound,0023} to \cref{main_inequality} gives that
  \eqe{ \label{391}
  &c_1\norm{\tx\+}^2 + c_2\norm{\tv\+}^2_{\mI-\gamma \mB^2} + c_3\norm{\ty\+}^2 \\
  \leq& \delta_1c_1\norm{\tx^k}^2 + \delta_2c_2\norm{\tw^k}^2 + \delta_3c_3\norm{\ty^k}^2 \\
  &- \lt(\frac{1}{2} - 2\theta\alpha\beta\os(C)\os^2(\mA)\rt)c_2\norm{\tw^k}^2_{\mC} \\
  \leq& \delta_1c_1\norm{\tx^k}^2 + \delta_2c_2\norm{\tw^k}^2 + \delta_3c_3\norm{\ty^k}^2,
  }
  which completes the proof.
\end{appendix_proof}

\section{} \label{appendix_convergence_npga_h_smooth}
\begin{appendix_proof}[\cref{convergence_npga_h_smooth}]
  Using the same analysis steps with \cref{main_inequality}, we can obtain that
  \eqe{ \label{2128}
  &\norm{\tv\+}^2_{\mI-\gamma \mB^2} + \frac{1}{\gamma}\norm{\ty\+}^2 \\
  =& \norm{\tw^k-\mC\tw^k+\beta \mA\tx\+}^2 + \frac{1}{\gamma}\norm{\ty^k}^2_{\mI-\gamma \mB^2} \\
  &+ (1+\theta)\theta\beta^2\norm{\mA\lt(\tx\+-\tx^k\rt)}^2  + \theta\beta^2\norm{\mA\tx\+}^2 \\
  &- \theta\beta^2\norm{\mA\tx}^2 + 2\theta\beta\dotprod{\tw^k-\mC\tw^k, \mA\lt(\tx\+-\tx^k\rt)}.
  }
  Recall \cref{jen1}, we have
  \eqe{
  &\norm{\tw^k-\mC\tw^k+\beta \mA\tx\+}^2 \\
  =& \norm{\tw^k}^2 + \|\beta \mA\tx\+-\mC\tw^k\|^2 - 2\norm{\tw^k}^2_\mC \\
  &+ 2\beta\dotprod{\mA\T\tw^k, \tx\+} \\
  =& \frac{\beta^2\os^2(\mA)}{1-\os(C)}\norm{\tx\+}^2 + \norm{\tw^k}^2_{\mI-\mC} + 2\beta\dotprod{\mA\T\tw^k, \tx\+},
  \nonumber
  }
  applying it and \cref{0130} to \cref{2128} gives that
  \eqe{ \label{2220}
  &\norm{\tv\+}^2_{\mI-\gamma \mB^2} + \frac{1}{\gamma}\norm{\ty\+}^2 \\
  \leq& \beta^2\os^2(\mA)\lt(\frac{1}{1-\os(C)}+\theta\rt)\norm{\tx\+}^2 + \norm{\tw^k}^2_{\mI-\mC} \\
  &+ 2\beta\dotprod{\mA\T\tw^k, \tx\+} + \frac{1}{\gamma}\norm{\ty^k}^2_{\mI-\gamma \mB^2} \\
  &+ \theta(1+\theta)\beta^2\|\mA(\tx\+-\tx^k)\|^2- \frac{\theta\beta}{\alpha}\norm{\tx\+-\tx^k}^2 \\
  &+ 2\theta\alpha\beta\norm{\mA\T\mC\tw^k}^2 + 2\theta\alpha\beta\norm{\nabla f(\mx^k)-\nabla f(\mx^*)}^2.
  }

  Since $g$ is convex, we have
  \eqe{
    0 \leq& 2\alpha\dotprod{\tx\+, \mathcal{S}_g(\mx\+)-\mathcal{S}_g(\mx^*)} \\
    =& 2\dotprod{\tx\+, \tx^k - \tx\+} \\
    &- 2\dotprod{\tx\+, \alpha\lt(\nabla f(\mx^k)-\nabla f(\mx^*)\rt) + \alpha\mA\T\tw^k},
    \nonumber
  }
  note that
  \eqe{
    2\dotprod{\tx\+, \tx^k - \tx\+} = \norm{\tx^k}^2 - \norm{\tx\+}^2 - \norm{\tx\+-\tx^k}^2,
    \nonumber
  }
  then we have
  \eqe{ \label{2218}
    \norm{\tx\+}^2 \leq& \norm{\tx^k}^2 - \norm{\tx\+-\tx^k}^2 \\
    &- 2\alpha\dotprod{\tx\+, \nabla f(\mx^k)-\nabla f(\mx^*)} \\
    &- 2\alpha\dotprod{\tx\+, \mA\T\tw^k}.
  }
  Let $c_2 = \frac{\alpha}{\beta}$ and $c_3 = \frac{\alpha}{\beta\gamma}$, combining \cref{2218,2220} gives that
  \eqe{ \label{1239}
  &\norm{\tx\+}^2 + c_2\lt(\norm{\tv\+}^2_{\mI-\gamma \mB^2} + \frac{1}{\gamma}\norm{\ty\+}^2\rt) \\
  \leq& \alpha\beta\os^2(\mA)\lt(\frac{1}{1-\os(C)}+\theta\rt)\norm{\tx\+}^2 \\
  &+ \norm{\tx^k}^2 - 2\alpha\dotprod{\tx\+, \nabla f(\mx^k)-\nabla f(\mx^*)} \\
  &+ c_2\norm{\tw^k}^2_{\mI-\mC} + c_3\norm{\ty^k}^2_{\mI-\gamma \mB^2} - \norm{\tx\+-\tx^k}^2 \\
  &+ \theta(1+\theta)\alpha\beta\|\mA(\tx\+-\tx^k)\|^2 - \theta\norm{\tx\+-\tx^k}^2 \\
  &+ 2\theta\alpha^2\norm{\nabla f(\mx^k)-\nabla f(\mx^*)}^2 + 2\theta\alpha^2\norm{\mA\T\mC\tw^k}^2.
  }

  Recall that $f$ is $\mu$-strongly convex and $l$-smooth, we have
  \eqe{ \label{0146}
    \norm{\nabla f(\mx^k)-\nabla f(\mx^*)}^2 \leq l^2\norm{\tx^k}
  }
  and
  \eqe{ \label{0147}
    &\dotprod{\tx\+, \nabla f(\mx^k)-\nabla f(\mx^*)} \\
    =& \dotprod{\mx\+ - \mx^*, \nabla f(\mx^k)} - \dotprod{\mx\+ - \mx^*, \nabla f(\mx^*)} \\
    =& \dotprod{\mx\+ - \mx^k, \nabla f(\mx^k)} + \dotprod{\mx^k - \mx^*, \nabla f(\mx^k)} \\
    &- \dotprod{\mx\+ - \mx^*, \nabla f(\mx^*)} \\
    =& -\dotprod{\mx\+ - \mx^k, \nabla f(\mx\+) - \nabla f(\mx^k)} \\
    &- \dotprod{\mx^k - \mx\+, \nabla f(\mx\+)} - \dotprod{\mx^* - \mx^k, \nabla f(\mx^k)} \\
    &- \dotprod{\mx\+ - \mx^*, \nabla f(\mx^*)} \\
    \geq& -l\norm{\mx\+ - \mx^k}^2 \\
    &- \lt(f(\mx^k) - f(\mx\+) - \frac{\mu}{2}\norm{\mx\+-\mx^k}^2\rt) \\
    &- \lt(f(\mx^*) - f(\mx^k) - \frac{\mu}{2}\norm{\tx^k}^2\rt) \\
    &- \lt(f(\mx\+) - f(\mx^*) - \frac{\mu}{2}\norm{\tx\+}^2\rt) \\
    \geq& -\lt(l-\frac{\mu}{2}\rt)\norm{\tx\+ - \tx^k}^2 + \frac{\mu}{2}\lt(\norm{\tx\+}^2 + \norm{\tx^k}^2\rt) \\
  }
  where the first inequality holds due to \cref{convex_inequality}.
  Applying \cref{2352,0146,0147} to \cref{1239} gives that
  \eqe{ \label{2239}
  &\norm{\tx\+}^2 + c_2\norm{\tv\+}^2_{\mI-\gamma \mB^2} + c_3\norm{\ty\+}^2 \\
  \leq& -\alpha\lt(\mu-\beta\os^2(\mA)\lt(\frac{1}{1-\os(C)}+\theta\rt)\rt)\norm{\tx\+}^2 \\
  &+ (1-\alpha\lt(\mu-2\theta\alpha l^2\rt))\norm{\tx^k}^2 + c_2\norm{\tw^k}^2 \\
  &+ (1-\gamma\us^2(B))c_3\norm{\ty^k}^2 \\
  &- (1-\alpha(2l-\mu))\norm{\tx\+-\tx^k}^2 \\
  &- \theta (1-(1+\theta)\alpha\beta\os^2(\mA))\norm{\tx\+-\tx^k}^2 \\
  &- \lt(1 - 2\theta\alpha\beta\os^2(\mA)\rt)c_2\norm{\tw^k}^2_{\mC}.
  }
  According to \cref{step_npga_h_smooth} and $0 \leq \theta \leq 1$, we have
  \eqe{ \label{121}
    \mu &\geq \beta\os^2(\mA)\lt(\frac{1}{1-\os(C)}+\theta\rt), \ 1 > \alpha(2l-\mu), \\
    \alpha\beta &< \frac{1}{\os^2(\mA)\lt(1+\theta\rt)}, \ 2\theta\alpha\beta\os^2(\mA) < \frac{2\theta}{1+\theta} \leq 1,
  }
  it follows that
  \eqe{ \label{1139}
  &\norm{\tx\+}^2 + c_2\norm{\tv\+}^2_{\mI-\gamma \mB^2} + c_3\norm{\ty\+}^2 \\
  \leq& \delta_1\norm{\tx^k}^2 + c_2\norm{\tw^k}^2 + \delta_3 c_3\norm{\ty^k}^2,
  }
  where $\delta_1 = 1-\alpha\lt(\mu-2\theta\alpha l^2\rt)$ and $\delta_3 = 1-\gamma\us^2(B)$.

  Note that \cref{h_smooth} implies that $\mh$ is $\frac{1}{nl_h}$-strongly convex. According to \cref{npga_l,npga_fixed_point_d}, we have
  \eqe{
    \mD\tv\+ = \tw\+ + \beta \lt(\mathcal{S}_{\mh}\lt(\ml\+\rt) - \mathcal{S}_{\mh}\lt(\ml^*\rt)\rt),
    \nonumber
  }
  it follows that
  \eqe{
    &\dotprod{\tw\+, \mD\tv\+} \\
    =& \norm{\tw\+}^2 + \beta \dotprod{\tw\+, \mathcal{S}_{\mh}\lt(\ml\+\rt) - \mathcal{S}_{\mh}\lt(\ml^*\rt)} \\
    \geq& \lt(1+\frac{\beta}{nl_h}\rt)\norm{\tw\+}^2,
    \nonumber
  }
  where the inequality holds due to the strong convexity of $\mh$. Applying the Cauchy-Schwarz inequality gives that
  \eqe{
    \lt(1+\frac{\beta}{nl_h}\rt)\norm{\tw\+}^2 \leq \norm{\tw\+}\norm{\mD\tv\+} \leq \norm{\tw\+}\norm{\tv\+},
    \nonumber
  }
  it follows that
  \eqe{
    \lt(1+\frac{\beta}{nl_h}\rt)\norm{\tw\+} \leq \norm{\tv\+}.
    \nonumber
  }
  Also note that
  \eqe{
  \norm{\tv\+}^2_{\mI-\gamma \mB^2} \geq \lt(1-\gamma\os^2(B)\rt)\norm{\tv\+}^2,
  \nonumber
  }
  then we have
  \eqe{ \label{l_bound_2}
  \norm{\tv\+}^2_{\mI-\gamma \mB^2} \geq \lt(1-\gamma\os^2(B)\rt)\lt(1+\frac{\beta}{nl_h}\rt)^2\|\tw\+\|^2.
  }
  Applying \cref{l_bound_2} to \cref{1139} gives that
  \eqe{
    &\norm{\tx\+}^2 + \lt(1+\frac{\beta}{nl_h}\rt)^2\lt(1-\gamma\os^2(B)\rt)c_2\norm{\tw\+}^2 \\
    &+ c_3\norm{\ty\+}^2 \\
    \leq& \delta_1\norm{\tx^k}^2 + c_2\norm{\tw^k}^2 + \delta_3c_3\norm{\ty^k}^2.
    \nonumber
  }
  Let $\delta_2 = \frac{1}{\lt(1+\frac{\beta}{nl_h}\rt)^2\lt(1-\gamma\os^2(B)\rt)}$, then we have
  \eqe{
    &\delta_2\norm{\tx\+}^2 + c_2\norm{\tw\+}^2 + c_3\delta_2\norm{\ty\+}^2 \\
    \leq& \delta_1\delta_2\|\tx^k\|^2 + \delta_2c_2\norm{\tw^k}^2 + \delta_3c_3\delta_2\norm{\ty^k}^2 \\
    \leq& \delta\lt(\delta_2\|\tx^k\|^2 + c_2\norm{\tw^k}^2 + c_3\delta_2\norm{\ty^k}^2\rt),
    \nonumber
  }
  it follows that
  \eqe{
    \delta_2\norm{\tx^k}^2 \leq \delta^k\lt(\delta_2\norm{\tx^0}^2 + c_2\norm{\tw^0}^2 + c_3\delta_2\norm{\ty^0}^2\rt).
    \nonumber
  }
  According to \cref{condition_main_inequality,step_npga_h_smooth}, we can easily verify that $\delta_1, \delta_2, \delta_3 \in (0, 1)$, thus $\delta \in (0, 1)$, which completes the proof.
\end{appendix_proof}

\end{document}